\providecommand{\U}[1]{\protect\rule{.1in}{.1in}}
\providecommand{\U}[1]{\protect\rule{.1in}{.1in}}
\DeclareMathSymbol{\subsetneqq}{\mathbin}{AMSb}{36}
\theoremstyle{plain}
\numberwithin{equation}{section}
\newtheorem{theorem}{Theorem}[section]
\newtheorem{corollary}{Corollary}[section]
\newtheorem{lemma}{Lemma}[section]
\begin{document}
\title[Viscosity approximation method for a variational problem]{Viscosity approximation method for a variational problem}
\author{Ramzi May}%
\address{Mathematics Department, College of Science, King Faisal University, P.O. 380, Ahsaa 31982, Kingdom of Saudi Arabia}
\email{rmay@kfu.edu.sa}
\subjclass{47H09;47j05;47J25}
\keywords{Hilbert space; Variational inequality problem; nonexpansive mapping;
Inverse strongly monotone mappings}
\date{July 02, 2022}
\maketitle
\begin{abstract}
Let $Q$ be a nonempty closed and convex subset of a real Hilbert space $%
\mathcal{H}$, $S:Q\rightarrow Q$ a nonexpansive mapping, $A:Q\rightarrow Q$
an inverse strongly monotone operator, and $f:Q\rightarrow Q$ a contraction
mapping. We prove, under appropriate conditions on the real sequences $%
\{\alpha _{n}\}$ and $\{\lambda _{n}\},$ that for any starting point $x_{1}$
in $Q,$ the sequence $\{x_{n}\}$ generated by the iterative process
\begin{equation}
x_{n+1}=\alpha_{n}f(x_{n})+(1-\alpha_{n})SP_{Q}(x_{n}-\lambda_{n}Ax_{n})
\label{Alg}
\end{equation}
converges strongly to a particular element of the set $F_{ix}(S)\cap S_{VI(A,Q)}$ which we suppose that it is nonempty, where $F_{ix}(S)$ is the set of fixed point of the mapping $%
S$ and $S_{VI(A,Q)}$ is the set of $q\in Q$ such that $\langle
Aq,x-q\rangle\geq0$ for every $x\in Q.$ Moreover, we study the strong
convergence of a perturbed version of the algorithm generated by the above
process. Finally, we apply the main result to construct an algorithm
associated to a constrained convex optimization problem and we provide a
numerical experiment to emphasize the effect of the parameter $\{\alpha _{n}\}$
on the convergence rate of this algorithm.
\end{abstract}
\section{Introduction}

\noindent Let $\mathcal{H}$ be a real Hilbert space with inner product $%
\langle .,.\rangle $ and associated norm $\left\Vert .\right\Vert $.
Throughout this paper, we assume the following assumptions:
\par\noindent(A1) $Q$ is a nonempty, closed and convex subset of $\mathcal{H}$.
\par\noindent(A2) $S:Q\rightarrow Q$ is a nonexpansive mapping which means that $%
\left\Vert Sx-Sy\right\Vert \leq\left\Vert x-y\right\Vert $ for every $%
x,y\in Q$.
\par\noindent (A3) $A:Q\rightarrow \mathcal{H}$ is a $\nu$ inverse strongly
monotone operator which means that there exists a real $\nu>0$ such that
\begin{equation*}
\langle Ax-Ay,x-y\rangle\geq\nu\left\Vert Ax-Ay\right\Vert ^{2}~\forall
x,y\in Q;
\end{equation*}
(A4) $f:Q\rightarrow Q$ is a contraction with coefficient $\rho\in
\lbrack0,1[,$ that is
\begin{equation*}
\left\Vert f(x)-f(y)\right\Vert \leq\rho\left\Vert x-y\right\Vert ~\forall
x,y\in Q.
\end{equation*}
We denote by $F_{ix}(S)=\{x\in Q:Sx=x\}$ the set of fixed points of the
operator $S$ and by $S_{VI(A,Q)}$ the set of solutions of the following
variational inequality%
\begin{equation}
\text{Find }q\in Q\text{ such that }\langle Aq,x-q\rangle\geq0\text{ for
every }x\in Q.
\end{equation}
It is easy to prove that the sets $F_{ix}(S)$ and $S_{VI(A,Q)}$ are closed
and convex subsets of $\mathcal{H}$ (see Lemma \ref{L} and Lemma \ref{L2} in
the next section); hence the set
\begin{equation*}
\Omega:=F_{ix}(S)\cap S_{VI(A,Q)}
\end{equation*}
is also closed and convex subset of $\mathcal{H}.$ Hereafter, we assume
moreover that:
\par\noindent (A5): The set $\Omega$ is nonempty.

In this paper, we are interested in the numerical approximation of some
particular elements of $\Omega $. We recall that Takahashi and Toyoda \cite%
{TT} introduced the following algorithm%
\begin{equation}
\left\{
\begin{array}{l}
x_{1}\in Q \\
x_{n+1}=\alpha _{n}x_{n}+(1-\alpha _{n})SP_{Q}(x_{n}-\lambda
_{n}Ax_{n}),~n\geq 1,%
\end{array}%
\right.  \label{Al1}
\end{equation}%
where $P_{Q}:\mathcal{H}\rightarrow Q$ is the metric projection from $%
\mathcal{H}$ onto $Q$ (see Lemma \ref{L0} for the definition. They proved that if the sequence $\{(\alpha
_{n},\lambda _{n})\}$ remains in a fixed compact subset of $]0,1[\times
]0,2\nu \lbrack ,$ then any sequence $\{x_{n}\}$ generated by the process (%
\ref{Al1}) converges weakly to some element $q_{\infty }$ of $\Omega .$ To
overcome the drawback of the weak convergence and the non specification of
the limit point $q_{\infty }$ in the algorithm (\ref{Al1}) , Iiduka and
Tokahashi \cite{IT} have introduced in 2005 the following iterative process:%
\begin{equation}
\left\{
\begin{array}{l}
x_{1}\in Q \\
x_{n+1}=\alpha _{n}u+(1-\alpha _{n})SP_{Q}(x_{n}-\lambda _{n}Ax_{n}),~n\geq
1,%
\end{array}%
\right.  \label{Al2}
\end{equation}%
where $u$ is a fixed element of $Q.$ They established that if $\{\lambda
_{n}\}\in \lbrack a,b]$, with $0<a<b<2\nu $, $\{\alpha _{n}\}\in \lbrack
0,1],$ $\alpha _{n}\rightarrow 0,$ $\sum_{n\geq 1}\alpha _{n}=+\infty $ and $%
\sum_{n\geq 1}\left( \left\vert \alpha _{n+1}-\alpha _{n}\right\vert
+\left\vert \lambda _{n+1}-\lambda _{n}\right\vert \right) <\infty $, then
any sequence $\{x_{n}\}$ generated by the algorithm (\ref{Al2}) converges
strongly in $\mathcal{H}$ the closed
element of $\Omega $ to $u.$

In 2011, Yao, Liou and Chen \cite{YLC} studied two averaged version the
algorithm (\ref{AL2}). Precisely, they of introduced the following two
algorithms%
\begin{equation}
\left\{
\begin{array}[l]{l}
x_{1}\in Q \\
x_{n+1}=\beta_{n}x_{n}+(1-\beta_{n})P_{Q}(\alpha_{n}u+(1-\alpha_{n})SP_{Q}%
\left( x_{n}-\lambda_{n}Ax_{n}\right) ),~n\geq1;%
\end{array}
\right.  \label{Y1}
\end{equation}%
\begin{equation}
\left\{
\begin{array}[l]{l}
x_{1}\in Q \\
x_{n+1}=\beta_{n}x_{n}+(1-\beta_{n})SP_{Q}(\alpha_{n}u+(1-\alpha_{n})\left(
x_{n}-\lambda_{n}Ax_{n}\right) ),~n\geq1.%
\end{array}
\right.  \label{Y2}
\end{equation}
They proved that if the sequence\ $\{\alpha_{n}\}$ and $\{\lambda_{n}\}$
satisfy the same assumptions as in the Theorem of Iiduka and Tokahashi and $%
\{\beta_{n}\}$ belongs to a sub-interval $[0,b]$ of $[0,1[$ and satisfies $%
\sum_{n\geq1}\left\vert \beta_{n+1}-\beta_{n}\right\vert <\infty$, then any
sequence $\{x_{n}\}$ generated by (\ref{Y1}) or (\ref{Y2}) converges
strongly in $\mathcal{H}$ to the closed
element of $\Omega $ to $u.$

In this paper, inspired by the viscosity approximation method due to A.
Moudafi \cite{Mou}, we introduce the following iterative process%
\begin{equation}
\left\{
\begin{array}{l}
x_{1}\in Q\\
x_{n+1}=\alpha _{n}f(x_{n})+(1-\alpha _{n})SP_{Q}(x_{n}-\lambda
_{n}Ax_{n}),~n\geq 1,%
\end{array}%
\right.  \label{Al3}
\end{equation}%
which is a generalization of the algorithm (\ref{Al2}). Under the same
assumptions on the sequences $\{\lambda _{n}\}$ and $\{\alpha _{n}\}$ in the
above convergence result of Iiduka and Tokahashi, we prove that any sequence
generated by the algorithm (\ref{Al3}) converges strongly to $q^{\ast }$ the
unique solution of the variational problem%
\begin{equation}
\left\{
\begin{array}{l}
q^{\ast }\in \Omega , \\
\langle f(q^{\ast })-q^{\ast },x-q^{\ast }\rangle \leq 0,~\forall x\in
\Omega .%
\end{array}%
\right.  \tag{VP}
\end{equation}%
Moreover, we establish the strong convergence of the implicit version of the
algorithm (\ref{Al3}). Precisely, we will prove that if $\lambda
:]0,1]\rightarrow \lbrack a,b]$, with $[a,b]\subset ]0,2\nu \lbrack ,$ then
for every $t\in ]0,1]$ there exists a unique $x_{t}$ in $Q$ such that%
\begin{equation}
x_{t}=t~f(x_{t})+(1-t)~SP_{Q}(x_{t}-\lambda (t)Ax_{t}).  \label{Al4}
\end{equation}%
Then we show that $x_{t}$ converges strongly in $\mathcal{H}$ to $q^{\ast }$
as $t\rightarrow 0^{+}.$

The sequel of the paper is organized as follows: In the next section, we
recall some well-known results from convex analysis that will be useful in
the proof of the main results of the paper. In the third section, we
establish the strong convergence of the implicit algorithm (\ref{Al4}). In
the fourth section, we study the strong convergence of the explicit
algorithm (\ref{Al3}). Then we prove the stability of the process (\ref{Al4}%
) under the effect of small perturbations and we apply the obtained results
to the study of a constrained optimization problem. The last section is
devoted to the study of a numerical experiment that highlight the effect of the
sequence $\{\alpha _{n}\}$ on the convergence rate of a particular example
of the perturbed version of the algorithm (\ref{Al4}).

\section{Preliminaries}

In this section, we recall some results that will be helpful in the next
sections. Most of these results can be found in any good book on convex
analysis as \cite{BC}, \cite{Gul} and \cite{Pey}.
Let us first recall the definition of the metric projection onto a nonempty,
closed and convex subset of $\mathcal{H}$.
\begin{lemma}[{\protect\cite[Theorem 3.14]{BC}}]
\label{L0}Let $K$ be a nonempty, closed and convex subset of $\mathcal{H}$. For every $%
x\in \mathcal{H},$ there exists a unique $P_{K}(x)\in Q$ such that
\begin{equation*}
\left\Vert x-P_{K}(x)\right\Vert \leq \left\Vert x-y\right\Vert \ ~\forall
y\in K.
\end{equation*}%
The operator $P_{K}:\mathcal{H}\rightarrow K$ is called the metric
projection onto $K$.
\end{lemma}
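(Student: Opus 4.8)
The statement is the classical Hilbert-space projection theorem, and the plan is to establish existence through a minimizing sequence controlled by the parallelogram identity, then to deduce uniqueness from the same identity. Put $d:=\inf_{y\in K}\|x-y\|$; this is a finite nonnegative number because $K$ is nonempty. Pick a sequence $\{y_n\}\subset K$ with $\|x-y_n\|\to d$. The first and main point is to show that $\{y_n\}$ is Cauchy. For any indices $n,m$, the parallelogram law applied to the vectors $x-y_n$ and $x-y_m$ gives
\[
\|y_n-y_m\|^2 = 2\|x-y_n\|^2 + 2\|x-y_m\|^2 - 4\Bigl\|x-\tfrac{y_n+y_m}{2}\Bigr\|^2 .
\]
Since $K$ is convex, $\tfrac{y_n+y_m}{2}\in K$, so the last term is at least $4d^2$; hence
\[
\|y_n-y_m\|^2 \le 2\|x-y_n\|^2 + 2\|x-y_m\|^2 - 4d^2,
\]
and the right-hand side tends to $2d^2+2d^2-4d^2=0$ as $n,m\to\infty$. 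This is exactly the place where convexity of $K$ is indispensable.

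Having shown $\{y_n\}$ is Cauchy, I would use completeness of $\mathcal{H}$ to obtain a limit $p\in\mathcal{H}$, and closedness of $K$ to conclude $p\in K$; continuity of the norm then gives $\|x-p\|=\lim_n\|x-y_n\|=d$, so $p$ realizes the infimum, and we define $P_K(x):=p$. For uniqueness, if $p,p'\in K$ both satisfy $\|x-p\|=\|x-p'\|=d$, then applying the parallelogram law once more together with $\tfrac{p+p'}{2}\in K$ yields
\[
\|p-p'\|^2 = 2\|x-p\|^2 + 2\|x-p'\|^2 - 4\Bigl\|x-\tfrac{p+p'}{2}\Bigr\|^2 \le 2d^2+2d^2-4d^2 = 0,
\]
so $p=p'$, and $P_K$ is well defined as a map $\mathcal{H}\to K$.

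There is no real obstacle here: the proof is short and the only subtle ingredients are the combined use of convexity (to place midpoints of minimizing pairs back in $K$) and of completeness together with closedness (to locate the limit of the minimizing sequence inside $K$); dropping either hypothesis makes the conclusion fail. As an aside, the same vector $p=P_K(x)$ can be characterized by the variational inequality $\langle x-p,\,y-p\rangle\le 0$ for all $y\in K$, obtained by differentiating $t\mapsto\|x-((1-t)p+ty)\|^2$ at $t=0^+$; this characterization is presumably recorded separately and used repeatedly in the sequel, but it is not needed for the present existence-and-uniqueness statement.
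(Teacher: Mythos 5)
Your proof is correct and is the standard minimizing-sequence argument via the parallelogram law; the paper itself gives no proof of this lemma, simply citing Bauschke--Combettes, whose proof is exactly the one you reproduce (convexity placing midpoints back in $K$, completeness plus closedness locating the limit, and the same identity giving uniqueness). Nothing further is needed.
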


The following classical properties of the projection operator $P_{K}$ are
very useful.

\begin{lemma}[{\protect\cite[Corollary 4.18]{BC}}]
\label{L1}Let $K$ be a nonempty, closed and convex subset of $\mathcal{H}$.

\begin{enumerate}
\item[(1)] For every $x\in \mathcal{H},$ $P_{K}(x)$ is the unique element of
$K$ which satisfies%
\begin{equation}
\langle P_{K}(x)-x,P_{K}(x)-y\rangle\leq0,\text{ for every }y\in K.
\label{h}
\end{equation}

\item[(2)] The operator $P_{K}:\mathcal{H}\rightarrow K$ is firmly
nonexpansive i.e.,%
\begin{equation}
\langle P_{K}(x)-P_{K}(y),x-y\rangle\geq\left\Vert
P_{K}(x)-P_{K}(y)\right\Vert ^{2},\text{ for all }x,y\in\mathcal{H}.
\label{h1}
\end{equation}
In particular%
\begin{equation}
\left\Vert P_{K}(x)-P_{K}(y)\right\Vert \leq\left\Vert x-y\right\Vert ,\text{
for all }x,y\in\mathcal{H}.  \label{h2}
\end{equation}
\end{enumerate}
\end{lemma}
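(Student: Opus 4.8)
The plan is to establish part (1) directly from the minimality property in Lemma \ref{L0} together with the convexity of $K$, and then to obtain part (2) as a purely algebraic consequence of \eqref{h}. First I would fix $x\in\mathcal{H}$, set $p=P_{K}(x)$, take an arbitrary $y\in K$, and use that the segment point $p+t(y-p)=(1-t)p+ty$ lies in $K$ for every $t\in[0,1]$ by convexity. Feeding this competitor into the defining inequality $\|x-p\|\le\|x-p-t(y-p)\|$ and expanding the square gives $0\le -2t\langle x-p,y-p\rangle+t^{2}\|y-p\|^{2}$; dividing by $t>0$ and letting $t\to 0^{+}$ yields $\langle x-p,y-p\rangle\le 0$, which (after changing signs in both slots) is exactly \eqref{h}. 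For the uniqueness clause I would run the argument backwards: if some $p\in K$ satisfies $\langle p-x,p-y\rangle\le 0$ for all $y\in K$, then the identity $\|x-y\|^{2}=\|x-p\|^{2}+2\langle x-p,p-y\rangle+\|p-y\|^{2}$ together with the sign condition forces $\|x-y\|^{2}\ge\|x-p\|^{2}$ for every $y\in K$, so $p$ attains the minimum and hence coincides with $P_{K}(x)$ by the uniqueness asserted in Lemma \ref{L0}.

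For part (2), I would invoke \eqref{h} twice. Choosing $y=P_{K}(y)\in K$ in the characterization of $P_{K}(x)$ gives $\langle P_{K}(x)-x,P_{K}(x)-P_{K}(y)\rangle\le 0$, and interchanging the roles of $x$ and $y$ gives $\langle P_{K}(y)-y,P_{K}(y)-P_{K}(x)\rangle\le 0$. Adding these two inequalities and regrouping the inner products produces $\langle (P_{K}(x)-P_{K}(y))-(x-y),\,P_{K}(x)-P_{K}(y)\rangle\le 0$, i.e. $\|P_{K}(x)-P_{K}(y)\|^{2}\le\langle x-y,\,P_{K}(x)-P_{K}(y)\rangle$, which is \eqref{h1}. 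Finally \eqref{h2} follows from \eqref{h1} and the Cauchy--Schwarz inequality: $\|P_{K}(x)-P_{K}(y)\|^{2}\le\|x-y\|\,\|P_{K}(x)-P_{K}(y)\|$, and dividing through by $\|P_{K}(x)-P_{K}(y)\|$ when it is nonzero (the case $P_{K}(x)=P_{K}(y)$ being trivial) gives nonexpansiveness.

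I do not expect any genuine obstacle here, since Lemma \ref{L0} is already in hand; the only points demanding a bit of care are the order of operations in the limiting argument of part (1) — one must divide by $t$ before letting $t\to 0^{+}$ — and the harmless case distinction $P_{K}(x)=P_{K}(y)$ at the very end of part (2). The whole argument is short and self-contained.
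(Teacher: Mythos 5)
Your proof is correct: the limiting argument along the segment $(1-t)p+ty$ gives the variational characterization \eqref{h}, the reverse implication via $\left\Vert x-y\right\Vert ^{2}=\left\Vert x-p\right\Vert ^{2}+2\langle x-p,p-y\rangle +\left\Vert p-y\right\Vert ^{2}$ gives uniqueness, and adding the two obtuse-angle inequalities yields \eqref{h1}, with \eqref{h2} following from Cauchy--Schwarz. The paper offers no proof of this lemma --- it is quoted directly from Bauschke--Combettes --- and your argument is exactly the standard one from that source, so there is nothing to reconcile.
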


\begin{lemma} [{\protect\cite[Theorem 3.13]{BC}}] \label{L} Let $C$ be a closed convex and nonempty
subset of $\mathcal{H}$, and $T:C\rightarrow C$ a nonexpansive mapping. Then
$F_{ix}(T)=\{x\in C:T(x)=x\}$ is a closed and convex subset of $\mathcal{H}$.
\end{lemma}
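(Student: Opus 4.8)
The plan is to handle closedness and convexity separately. For closedness, I would use that a nonexpansive map is in particular $1$-Lipschitz, hence continuous, so the map $g:C\rightarrow\mathcal{H}$ defined by $g(x)=x-T(x)$ is continuous; since $F_{ix}(T)=\{x\in C:g(x)=0\}$ is the preimage of the closed set $\{0\}$ under a continuous map on the closed set $C$, it is closed in $\mathcal{H}$. This step is routine.

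For convexity, let $x,y\in F_{ix}(T)$, fix $\lambda\in[0,1]$, and put $z=\lambda x+(1-\lambda)y$; then $z\in C$ because $C$ is convex, and the goal is to show $T(z)=z$. The key algebraic ingredient is the Hilbert-space identity
\begin{equation*}
\lambda\left\Vert w-x\right\Vert ^{2}+(1-\lambda)\left\Vert w-y\right\Vert ^{2}=\left\Vert w-z\right\Vert ^{2}+\lambda(1-\lambda)\left\Vert x-y\right\Vert ^{2},
\end{equation*}
valid for every $w\in\mathcal{H}$, which is obtained by expanding both sides with the inner product. Applying it with $w=T(z)$ and then with $w=z$, and using nonexpansiveness of $T$ together with $T(x)=x$ and $T(y)=y$, one gets
\begin{align*}
\left\Vert T(z)-z\right\Vert ^{2}+\lambda(1-\lambda)\left\Vert x-y\right\Vert ^{2}&=\lambda\left\Vert T(z)-T(x)\right\Vert ^{2}+(1-\lambda)\left\Vert T(z)-T(y)\right\Vert ^{2}\\
&\leq\lambda\left\Vert z-x\right\Vert ^{2}+(1-\lambda)\left\Vert z-y\right\Vert ^{2}=\lambda(1-\lambda)\left\Vert x-y\right\Vert ^{2}.
\end{align*}
Hence $\left\Vert T(z)-z\right\Vert ^{2}\leq0$, so $T(z)=z$ and $z\in F_{ix}(T)$, which proves convexity.

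I do not expect a genuine obstacle here; the only point needing (a trivial amount of) care is checking the quadratic identity, and that is exactly where the Hilbert-space structure enters — equivalently, where strict convexity of the norm is used. An alternative route would be to notice that the triangle inequality $\left\Vert x-y\right\Vert \leq\left\Vert x-T(z)\right\Vert +\left\Vert T(z)-y\right\Vert $ must be an equality, since each summand is dominated by the corresponding piece of $\left\Vert z-x\right\Vert +\left\Vert z-y\right\Vert =\left\Vert x-y\right\Vert $, and then to invoke strict convexity to force $T(z)$ onto the segment $[x,y]$ precisely at $z$; but the identity-based computation above is shorter and entirely self-contained.
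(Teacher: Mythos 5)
Your proof is correct: the closedness argument via continuity of $x\mapsto x-T(x)$ and the convexity argument via the identity $\lambda\Vert w-x\Vert^{2}+(1-\lambda)\Vert w-y\Vert^{2}=\Vert w-z\Vert^{2}+\lambda(1-\lambda)\Vert x-y\Vert^{2}$ applied at $w=T(z)$ both check out, and forcing $\Vert T(z)-z\Vert^{2}\leq 0$ is exactly the right conclusion. The paper itself gives no proof of this lemma --- it simply cites Bauschke--Combettes --- and your argument is precisely the standard one from that reference, so there is nothing to compare beyond noting that your write-up is complete and self-contained.
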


\begin{lemma}
\label{L2}Let $\lambda\in]0,2\nu].$ Then the following assertions hold true

\begin{enumerate}
\item[(i)] For every $x,y\in Q,$%
\begin{equation}
\left\Vert (x-\lambda Ax)-(y-\lambda Ay)\right\Vert ^{2}\leq\left\Vert
x-y\right\Vert ^{2}-\lambda(2\nu-\lambda)\left\Vert Ax-Ay\right\Vert ^{2}.
\label{h3}
\end{equation}

\item[(ii)] The operator $\Theta_{\lambda}:=P_{Q}\circ(I-\lambda
A):Q\rightarrow Q$ is nonexpansive and $F_{ix}(\Theta_{%
\lambda})=S_{VI(A,Q)}. $

\item[(iii)] $S_{VI(A,Q)}$ is a closed and convex subset of $\mathcal{H}.$
\end{enumerate}
\end{lemma}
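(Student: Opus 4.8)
The plan is to establish the three assertions in the stated order, since (ii) will follow from (i) together with Lemma~\ref{L1}, and (iii) will follow from (ii) together with Lemma~\ref{L}.

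For part (i), I would simply expand the squared norm on the left-hand side,
\begin{equation*}
\left\Vert (x-\lambda Ax)-(y-\lambda Ay)\right\Vert ^{2}=\left\Vert x-y\right\Vert ^{2}-2\lambda \langle x-y,Ax-Ay\rangle +\lambda ^{2}\left\Vert Ax-Ay\right\Vert ^{2},
\end{equation*}
and then invoke the $\nu$-inverse strong monotonicity of $A$ from assumption (A3), namely $\langle x-y,Ax-Ay\rangle \geq \nu \left\Vert Ax-Ay\right\Vert ^{2}$. Substituting this bound (with the factor $-2\lambda <0$) and collecting the terms in $\left\Vert Ax-Ay\right\Vert ^{2}$ produces the coefficient $-2\lambda \nu +\lambda ^{2}=-\lambda (2\nu -\lambda )$, which is exactly the claimed inequality \eqref{h3}.

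For part (ii), the key observation is that since $\lambda \in ]0,2\nu ]$ one has $\lambda (2\nu -\lambda )\geq 0$, so part (i) immediately shows that $I-\lambda A$ is nonexpansive from $Q$ into $\mathcal{H}$; composing with the nonexpansive projection $P_{Q}$ (Lemma~\ref{L1}(2), inequality \eqref{h2}) shows that $\Theta _{\lambda }=P_{Q}\circ (I-\lambda A)$ is a nonexpansive self-map of $Q$. For the identification of the fixed point set I would use the variational characterization of the projection in Lemma~\ref{L1}(1): for $q\in Q$ the equality $q=P_{Q}(q-\lambda Aq)$ is equivalent to $\langle q-(q-\lambda Aq),q-y\rangle \leq 0$ for every $y\in Q$, i.e. to $\lambda \langle Aq,q-y\rangle \leq 0$ for every $y\in Q$; since $\lambda >0$, this is in turn equivalent to $\langle Aq,y-q\rangle \geq 0$ for every $y\in Q$, which is precisely $q\in S_{VI(A,Q)}$. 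Hence $F_{ix}(\Theta _{\lambda })=S_{VI(A,Q)}$. This is the only step demanding any thought, and the sole subtlety is keeping track of the sign and using $\lambda >0$ to cancel it.

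Finally, part (iii) is immediate: by (ii), $S_{VI(A,Q)}$ equals $F_{ix}(\Theta _{\lambda })$, the fixed point set of a nonexpansive self-map of the nonempty closed convex set $Q$, so Lemma~\ref{L} applies with $C=Q$ and $T=\Theta _{\lambda }$ and yields that $S_{VI(A,Q)}$ is a closed and convex subset of $\mathcal{H}$. I do not anticipate any genuine obstacle here; the whole argument is routine once part (i) is in hand, the only care being the bookkeeping in the projection characterization used in part (ii).
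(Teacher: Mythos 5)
Your proposal is correct and follows essentially the same route as the paper: expand the square and apply inverse strong monotonicity for (i), combine (i) with the nonexpansiveness of $P_{Q}$ and the variational characterization of the projection from Lemma~\ref{L1}(1) for (ii), and invoke Lemma~\ref{L} for (iii). No issues.
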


\begin{proof}
(i) Let $x,y\in Q.$ A simple computation gives
\begin{align*}
\left\Vert (x-\lambda Ax)-(y-\lambda Ay)\right\Vert ^{2} & =\left\Vert
x-y\right\Vert ^{2}-2\lambda\langle Ax-Ay,x-y\rangle+\lambda^{2}\left\Vert
Ax-Ay\right\Vert ^{2} \\
& \leq\left\Vert x-y\right\Vert ^{2}-\lambda(2\nu-\lambda)\left\Vert
Ax-Ay\right\Vert ^{2}.
\end{align*}

\noindent(ii) Combining (\ref{h2}) and (\ref{h3}) yields $\Theta_{\lambda}$ is
nonexpansive. Now let $q\in Q.$ Clearly $q\in S_{VI(A,Q)}$ if and only if
\begin{equation*}
\langle q-(q-\lambda Aq),q-x\rangle\leq0~\forall x\in Q,
\end{equation*}
which, thanks to the first assertion of Lemma \ref{L1}, is equivalent to $%
q=P_{Q}(q-\lambda Aq)=\Theta_{\lambda}(q).$

\noindent The last assertion (iii) follows directly from (ii) and Lemma \ref%
{L}.
\end{proof}

The next result is a particular case of the well-known demi-closedness
principle.

\begin{lemma}[{\protect\cite[Corollary 4.18]{BC}}]
\label{L3}Let $C$ be a closed convex and nonempty subset of $\mathcal{H}$,
and $T:C\rightarrow C$ a nonexpansive mapping. If $\{x_{n}\}$ is a sequence
in $C$ weakly converging to some $\bar{x}$ such that $x_{n}-Tx_{n}$
converges strongly to $0$ in $\mathcal{H}$, then $\bar{x}\in F_{ix}(T).$
\end{lemma}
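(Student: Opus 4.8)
The plan is to prove the stronger assertion that $\bar x - T\bar x = 0$ directly, using the Hilbert–space expansion of a squared norm and the single structural fact about weak convergence that a cross term against a fixed vector vanishes in the limit. Fix $\bar x$ as in the statement.

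First I would note that $\{x_n\}$, being weakly convergent, is bounded in norm, so $M:=\limsup_n\|x_n-\bar x\|^2<\infty$; the finiteness of $M$ is what will permit a cancellation at the end. Then, writing $x_n-T\bar x=(x_n-\bar x)+(\bar x-T\bar x)$ and expanding,
\[
\|x_n-T\bar x\|^2=\|x_n-\bar x\|^2+2\langle x_n-\bar x,\ \bar x-T\bar x\rangle+\|\bar x-T\bar x\|^2,
\]
the middle term tends to $0$ because $x_n\rightharpoonup\bar x$, so $\limsup_n\|x_n-T\bar x\|^2=M+\|\bar x-T\bar x\|^2$.

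On the other hand, the triangle inequality followed by nonexpansiveness of $T$ gives
\[
\|x_n-T\bar x\|\le\|x_n-Tx_n\|+\|Tx_n-T\bar x\|\le\|x_n-Tx_n\|+\|x_n-\bar x\|,
\]
and since $\|x_n-Tx_n\|\to0$ while $\|x_n-\bar x\|$ stays bounded, squaring and taking $\limsup$ yields $\limsup_n\|x_n-T\bar x\|^2\le M$. Comparing the two displays forces $\|\bar x-T\bar x\|^2\le0$, hence $\bar x=T\bar x$, that is, $\bar x\in F_{ix}(T)$.

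There is no genuine obstacle here; the only point requiring care is the $\limsup$ bookkeeping — that adding a null sequence leaves a $\limsup$ unchanged, and that the common finite quantity $M$ may legitimately be cancelled from both sides (which is precisely why the boundedness of $\{x_n\}$ is recorded first). An equivalent route would replace the middle step by Opial's property of $\mathcal{H}$, namely $\liminf_n\|x_n-\bar x\|<\liminf_n\|x_n-y\|$ whenever $y\neq\bar x$, applied to $y=T\bar x$; I mention this only as an alternative, since the argument above uses nothing beyond the inner product structure and the hypotheses at hand.
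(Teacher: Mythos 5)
Your argument is correct, and it is the standard proof of the demiclosedness principle; the paper itself does not prove this lemma but simply cites \cite[Corollary 4.18]{BC}, so your write-up supplies a self-contained justification that the paper omits. The $\limsup$ bookkeeping is handled properly: the cross term vanishes by weak convergence against the fixed vector $\bar{x}-T\bar{x}$, the upper bound $\limsup_n\|x_n-T\bar x\|^2\le M$ follows from nonexpansiveness plus the null sequence $\|x_n-Tx_n\|$, and the cancellation of $M$ is legitimate precisely because you recorded that weakly convergent sequences are norm-bounded. The only point I would ask you to make explicit is that $T\bar{x}$ is well defined, i.e.\ that $\bar{x}\in C$: this holds because $C$ is closed and convex, hence weakly closed (Mazur), so the weak limit of a sequence in $C$ stays in $C$. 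With that one line added, the proof is complete.
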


The last result of this section is a powerful lemma which is a
generalization due to Xu \cite{Xu} of a lemma proved by Berstrekas (\cite[%
Lemma 1.5.1]{Ber})

\begin{lemma}
\label{L4}let $\{a_{n}\}$ be a sequence of non negative real numbers such
that:%
\begin{equation}
a_{n+1}\leq(1-\gamma_{n})a_{n}+\gamma_{n}r_{n}+\delta_{n},\text{ }n\geq0,
\label{B}
\end{equation}
where $\{\gamma_{n}\}\in\lbrack0,1]$ and $\{r_{n}\}$and $\{\delta_{n}\}$ are
three real sequences such that:
\begin{enumerate}
\item[(1)] $\sum_{n=0}^{+\infty}\gamma_{n}=+\infty;$

\item[(2)] $\sum_{n=0}^{+\infty}\left\vert \delta_{n}\right\vert <+\infty;$

\item[(3)] $\lim\sup_{n\rightarrow+\infty}r_{n}\leq0.$
\end{enumerate}
Then the sequence $\{a_{n}\}$ converges to $0.$
\end{lemma}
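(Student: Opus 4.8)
The plan is to run a standard $\varepsilon$-reduction after unwinding the recursion (\ref{B}) explicitly. Fix $\varepsilon>0$. By hypothesis (3) there is an index $N_{0}$ such that $r_{n}\leq\varepsilon$ for every $n\geq N_{0}$, and by hypothesis (2) there is $N\geq N_{0}$ such that $\sum_{k\geq N}\left\vert\delta_{k}\right\vert\leq\varepsilon$. For $n\geq N$ the inequality (\ref{B}) then yields $a_{n+1}\leq(1-\gamma_{n})a_{n}+\gamma_{n}\varepsilon+\delta_{n}\leq(1-\gamma_{n})a_{n}+\gamma_{n}\varepsilon+\left\vert\delta_{n}\right\vert$, where I have used that $1-\gamma_{n}\geq0$ since $\gamma_{n}\in[0,1]$.

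A straightforward induction on $n\geq N$ (keeping careful track of which partial products carry which indices) then produces
\begin{equation*}
a_{n+1}\leq a_{N}\prod_{k=N}^{n}(1-\gamma_{k})+\varepsilon\sum_{k=N}^{n}\gamma_{k}\prod_{j=k+1}^{n}(1-\gamma_{j})+\sum_{k=N}^{n}\left\vert\delta_{k}\right\vert\prod_{j=k+1}^{n}(1-\gamma_{j}),
\end{equation*}
with the convention that an empty product equals $1$; note that all factors $1-\gamma_{j}$ lie in $[0,1]$, so the inequalities are preserved at each step (and if some $\gamma_{j}=1$ the relevant products simply vanish, which only helps).

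Next I would estimate the three terms on the right. For the middle sum I would use the telescoping identity $\gamma_{k}\prod_{j=k+1}^{n}(1-\gamma_{j})=\prod_{j=k+1}^{n}(1-\gamma_{j})-\prod_{j=k}^{n}(1-\gamma_{j})$, whence $\sum_{k=N}^{n}\gamma_{k}\prod_{j=k+1}^{n}(1-\gamma_{j})=1-\prod_{k=N}^{n}(1-\gamma_{k})\leq1$, so the middle term is at most $\varepsilon$. For the last sum, each product $\prod_{j=k+1}^{n}(1-\gamma_{j})\leq1$, so it is bounded by $\sum_{k\geq N}\left\vert\delta_{k}\right\vert\leq\varepsilon$. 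For the first term, the elementary inequality $1-x\leq e^{-x}$ gives $\prod_{k=N}^{n}(1-\gamma_{k})\leq\exp\!\big(-\sum_{k=N}^{n}\gamma_{k}\big)\to0$ as $n\to\infty$ by hypothesis (1), and since $a_{N}$ is a fixed finite number the first term tends to $0$.

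Combining the three estimates gives $\limsup_{n\to\infty}a_{n+1}\leq0+\varepsilon+\varepsilon=2\varepsilon$, hence $\limsup_{n\to\infty}a_{n}\leq2\varepsilon$; letting $\varepsilon\downarrow0$ and using $a_{n}\geq0$ forces $a_{n}\to0$. There is no genuine obstacle here: the only points that need a little care are the bookkeeping in the induction that yields the displayed inequality, the verification of the telescoping identity, and the observation that the argument is harmless when some $\gamma_{k}=1$. One may equivalently avoid the explicit unwinding by proving directly, by induction on $n\geq N$, that $a_{n+1}\leq2\varepsilon+a_{N}\prod_{k=N}^{n}(1-\gamma_{k})$, and then letting $n\to\infty$ followed by $\varepsilon\downarrow0$.
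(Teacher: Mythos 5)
Your proof is correct. The unwinding of the recursion is the standard one, the telescoping identity $\sum_{k=N}^{n}\gamma_{k}\prod_{j=k+1}^{n}(1-\gamma_{j})=1-\prod_{k=N}^{n}(1-\gamma_{k})$ is right, the bound $\prod_{k=N}^{n}(1-\gamma_{k})\leq\exp\bigl(-\sum_{k=N}^{n}\gamma_{k}\bigr)\rightarrow0$ correctly exploits hypothesis (1), and the passage to $\limsup a_{n}\leq2\varepsilon$ followed by $\varepsilon\downarrow0$ closes the argument. The handling of the case $\gamma_{j}=1$ and the replacement of $\delta_{n}$ by $\left\vert\delta_{n}\right\vert$ are both taken care of.

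However, your route is genuinely different from the one in the paper. What you give is essentially the classical argument (in the spirit of Xu's original proof): fix $\varepsilon$, iterate the recursion explicitly, and estimate the three resulting terms separately, with the divergence of $\sum\gamma_{n}$ entering through the exponential bound on the product $\prod(1-\gamma_{k})$. The paper deliberately avoids this unwinding. It absorbs the summable perturbation into the sequence itself by setting $s_{n}=\sum_{k\geq n}\delta_{k}$, $e_{n}=a_{n}+s_{n}$ and $\beta_{n}=\max\{r_{n},0\}+s_{n}$, which converts (\ref{B}) into the clean recursion $e_{n+1}\leq(1-\gamma_{n})e_{n}+\gamma_{n}\beta_{n}$ with $\beta_{n}\rightarrow0$; it then shows by contradiction that $e_{n}$ must eventually drop below any prescribed $\varepsilon$ (otherwise $\sum\gamma_{n}<\infty$) and that once it does it stays there. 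Your version is more computational but completely explicit and yields, as a by-product, a quantitative bound on $a_{n+1}$ at every stage; the paper's version is shorter and dispenses with the product estimates and the telescoping bookkeeping at the cost of the slightly less transparent change of variable $e_{n}=a_{n}+s_{n}$. Both are valid proofs of the lemma.
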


\begin{proof}
We give here a proof different from the original one due to Xu \cite{Xu}.
Set $s_{n}=\sum_{k=n}^{+\infty}\delta_{k},$ $e_{n}=a_{n}+s_{n},$ and $%
\beta_{n}=\max\{r_{n},0\}+s_{n}.$ Using the fact that $%
\delta_{n}=s_{n}-s_{n+1},$ we easily obtain from (\ref{B})%
\begin{equation}
e_{n+1}\leq(1-\gamma_{n})e_{n}+\gamma_{n}\beta_{n},~n\geq0.  \label{C}
\end{equation}
Let $\varepsilon >0.$ Since $\beta_{n}\rightarrow0$ as $n\rightarrow\infty,$
there exists $n_{0}\in \mathbb{N}$ such that $\beta _{n}\leq \frac{%
\varepsilon }{2}$ for every $n\geq n_{0}.$ Let us suppose that $e_{n}\geq
\varepsilon $ for every $n\geq n_{0}.$ Hence, from (\ref{C}), we infer that,
for every $n\geq n_{0},$
\begin{eqnarray*}
e_{n}-e_{n+1} &\geq &\gamma _{n}(e_{n}-\beta_{n}) \\
&\geq &\frac{\varepsilon }{2}\gamma _{n},
\end{eqnarray*}%
which implies that $\sum_{n\geq n_{0}}\gamma_{n}<\infty .$ This is a contradiction. Then there exists $n_{1}\geq n_{0}$ such that $e_{n_{1}}\leq
\varepsilon .$ Therefore%
\begin{eqnarray*}
e_{n_{1}+1} &\leq &(1-\gamma _{n_{1}})\varepsilon +\gamma_{n_{1}}\frac{%
\varepsilon }{2} \\
&\leq &\varepsilon .
\end{eqnarray*}%
And so on we get $e_{n}\leq \varepsilon $ for every $n\geq n_{1}$. Hence $%
e_{n}\rightarrow0$ as $n\rightarrow\infty,$ which clearly implies that $%
a_{n}\rightarrow0$ as $n\rightarrow\infty$ since $s_n\rightarrow0$ as $n\rightarrow\infty$.
\end{proof}

We close this section by proving that the problem (VP) has a
unique solution.

\begin{lemma}
The problem (VP) has a unique solution $q^{\ast}$. Moreover, $q^{\ast}$ is
the unique fixed point of the contraction $P_{\Omega}\circ
f:\Omega\rightarrow\Omega$.
\end{lemma}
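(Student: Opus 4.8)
The plan is to reduce the problem to a standard fixed-point argument. First I would observe that, since $f:Q\to Q$ is a contraction with coefficient $\rho\in[0,1[$ and $P_\Omega:\mathcal H\to\Omega$ is nonexpansive by Lemma \ref{L1}(2) (here using assumption (A5) that $\Omega$ is nonempty, together with the fact that $\Omega$ is closed and convex so that $P_\Omega$ is well defined by Lemma \ref{L0}), the composition $P_\Omega\circ f$ maps $\Omega$ into $\Omega$ and satisfies
\begin{equation*}
\left\Vert P_\Omega f(x)-P_\Omega f(y)\right\Vert \leq \left\Vert f(x)-f(y)\right\Vert \leq \rho\left\Vert x-y\right\Vert ,\quad\forall x,y\in\Omega .
\end{equation*}
Hence $P_\Omega\circ f$ is a contraction on the nonempty complete metric space $\Omega$ (closed subset of the Hilbert space $\mathcal H$), so by the Banach fixed-point theorem it has a unique fixed point $q^\ast\in\Omega$.

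Next I would show that $q^\ast$ solves (VP) and, conversely, that any solution of (VP) is a fixed point of $P_\Omega\circ f$; this equivalence is exactly the characterization of the projection in Lemma \ref{L1}(1). Indeed, for $q\in\Omega$ the inequality $\langle f(q)-q,x-q\rangle\leq 0$ for all $x\in\Omega$ can be rewritten as $\langle q-f(q),q-x\rangle\leq 0$ for all $x\in\Omega$, which by Lemma \ref{L1}(1) (applied with $K=\Omega$ and the point $f(q)\in\mathcal H$) is precisely the statement that $q=P_\Omega(f(q))=(P_\Omega\circ f)(q)$. Therefore the solution set of (VP) coincides with $F_{ix}(P_\Omega\circ f)$, which we have just shown to be the singleton $\{q^\ast\}$. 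This proves both existence and uniqueness for (VP) and identifies $q^\ast$ as the unique fixed point of $P_\Omega\circ f$.

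There is essentially no hard step here: the only points requiring a little care are verifying that $P_\Omega$ is well defined and nonexpansive (which needs $\Omega$ closed, convex, and nonempty, all already established via Lemma \ref{L}, Lemma \ref{L2}(iii) and (A5)), and correctly matching the sign conventions in the variational inequality (VP) with those in the projection characterization \eqref{h}. The rest is a direct invocation of the Banach contraction principle.
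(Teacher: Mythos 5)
Your proof is correct and follows essentially the same route as the paper: identify solutions of (VP) with fixed points of $P_\Omega\circ f$ via the variational characterization of the projection (Lemma \ref{L1}(1)), then apply the Banach fixed-point theorem to the contraction $P_\Omega\circ f$ on the nonempty closed convex set $\Omega$. You merely spell out the nonexpansiveness of $P_\Omega$ and the sign-matching in more detail than the paper does.
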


\begin{proof}
Let us first recall that the set $\Omega$ is nonempty, closed and convex
subset of $\mathcal{H}$. Then from the variational characterization of the
metric projection $P_{\Omega}$ (see the first assertion of Lemma \ref{L1}),
the problem (VP) is equivalent to the identity $q^{\ast}=P_{\Omega}(f(q^{%
\ast}))$. This, thanks to Banach fixed point theorem, guarantees the
existence and the uniqueness of $q^{\ast}$ since the application $%
P_{\Omega}\circ f:\Omega\rightarrow\Omega$ is clearly a contraction.
\end{proof}

\section{The convergence of the implicit algorithm (\protect\ref{Al4})}

The following section is devoted to the proof of the strong convergence of
the implicit algorithm (\ref{Al4}).

\begin{theorem}
\label{Theo1}Let $a$ and $b$ be two reals such that $0<a<b<2\nu$ and let $%
\lambda:]0,1[\rightarrow\lbrack a,b]$ be a mapping. Then, for every $%
t\in]0,1[,$ there exists a unique $x_{t}\in Q$ such that
\begin{equation*}
x_{t}=tf(x_{t})+(1-t)SP_{Q}(x_{t}-\lambda(t)Ax_{t}).
\end{equation*}
Moreover $\{x_{t}\}$ converges strongly in $\mathcal{H}$ as $%
t\rightarrow0^{+}$ to $q^{\ast}$ the unique solution of the variational
problem (VP).
\end{theorem}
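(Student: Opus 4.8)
The plan is to split the statement into two parts: the existence and uniqueness of $x_t$ for fixed $t$, and then the convergence $x_t \to q^\ast$ as $t \to 0^+$. For the first part, fix $t \in {}]0,1[$ and define the map $G_t : Q \to Q$ by $G_t(x) = t f(x) + (1-t) S \Theta_{\lambda(t)}(x)$, where $\Theta_{\lambda(t)} = P_Q \circ (I - \lambda(t) A)$. Since $\lambda(t) \in [a,b] \subset {}]0,2\nu[$, Lemma \ref{L2}(ii) tells us $\Theta_{\lambda(t)}$ is nonexpansive, and $S$ is nonexpansive by (A2), so $S \Theta_{\lambda(t)}$ is nonexpansive; combining this with the contraction property of $f$ gives, for all $x,y \in Q$,
\begin{equation*}
\|G_t(x) - G_t(y)\| \leq t\rho \|x-y\| + (1-t)\|x-y\| = (1 - t(1-\rho))\|x-y\|,
\end{equation*}
so $G_t$ is a contraction with coefficient $1 - t(1-\rho) < 1$. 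Since $Q$ is closed and convex, hence complete, the Banach fixed point theorem yields a unique $x_t \in Q$ with $x_t = G_t(x_t)$, which is exactly the asserted identity.

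For the convergence part, I would first establish that $\{x_t\}$ is bounded. Fixing any $p \in \Omega$, we have $Sp = p$ and $\Theta_{\lambda(t)}(p) = p$ (the latter by Lemma \ref{L2}(ii)), so using nonexpansiveness of $S\Theta_{\lambda(t)}$ and the contraction property of $f$,
\begin{equation*}
\|x_t - p\| \leq t\|f(x_t) - p\| + (1-t)\|x_t - p\| \leq t\rho\|x_t - p\| + t\|f(p) - p\| + (1-t)\|x_t - p\|,
\end{equation*}
which rearranges to $\|x_t - p\| \leq \frac{1}{1-\rho}\|f(p) - p\|$; hence $\{x_t\}$ is bounded, and so are $\{f(x_t)\}$, $\{Ax_t\}$, and $\{S\Theta_{\lambda(t)}(x_t)\}$. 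Next I would show $\|x_t - S\Theta_{\lambda(t)}(x_t)\| = t\|f(x_t) - S\Theta_{\lambda(t)}(x_t)\| \to 0$ as $t \to 0^+$. The more delicate estimate uses Lemma \ref{L2}(i): writing $y_t = x_t - \lambda(t) Ax_t$ and expanding $\|x_t - p\|^2$ via the identity $x_t - p = t(f(x_t) - p) + (1-t)(S\Theta_{\lambda(t)}(x_t) - p)$ together with the convexity-type inequality and the fact that $\|S\Theta_{\lambda(t)}(x_t) - p\|^2 \leq \|\Theta_{\lambda(t)}(x_t) - p\|^2 \leq \|x_t - p\|^2 - \lambda(t)(2\nu - \lambda(t))\|Ax_t - Ap\|^2$, I would derive that $\lambda(t)(2\nu - \lambda(t))\|Ax_t - Ap\|^2$ is controlled by a term of order $t$, and since $\lambda(t)(2\nu - \lambda(t)) \geq a(2\nu - b) > 0$ uniformly, conclude $\|Ax_t - Ap\| \to 0$, hence (using firm nonexpansiveness of $P_Q$) $\|\Theta_{\lambda(t)}(x_t) - x_t\| \to 0$ and therefore $\|x_t - Sx_t\| \to 0$ as $t \to 0^+$.

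The main obstacle — and the heart of the argument — is passing from these asymptotic-regularity facts to the identification of the strong limit as $q^\ast$. I would argue by taking an arbitrary sequence $t_n \to 0^+$; boundedness gives a weakly convergent subsequence $x_{t_n} \rightharpoonup \bar x$, and since $\|x_{t_n} - Sx_{t_n}\| \to 0$, the demiclosedness principle (Lemma \ref{L3}) forces $\bar x \in F_{ix}(S)$; similarly, since $\|x_{t_n} - \Theta_{\lambda(t_n)}(x_{t_n})\| \to 0$, I would need a demiclosedness argument for the family $\Theta_\lambda$ — this requires care because $\lambda$ varies with $n$, but one can pass to a further subsequence with $\lambda(t_n) \to \lambda_\infty \in [a,b]$ and use that $\Theta_{\lambda_\infty}(x_{t_n}) - \Theta_{\lambda(t_n)}(x_{t_n})$ is controlled by $|\lambda(t_n) - \lambda_\infty| \cdot \|Ax_{t_n}\|$, which tends to $0$, so $\|x_{t_n} - \Theta_{\lambda_\infty}(x_{t_n})\| \to 0$ and Lemma \ref{L3} gives $\bar x \in F_{ix}(\Theta_{\lambda_\infty}) = S_{VI(A,Q)}$ by Lemma \ref{L2}(ii); thus $\bar x \in \Omega$. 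Finally, from the identity $x_t - f(x_t) = -\frac{1-t}{t}(x_t - S\Theta_{\lambda(t)}(x_t))$ one gets, for any $p \in \Omega$,
\begin{equation*}
\langle x_t - f(x_t), x_t - p \rangle = -\tfrac{1-t}{t}\langle x_t - S\Theta_{\lambda(t)}(x_t), x_t - p \rangle \leq 0,
\end{equation*}
using that $S\Theta_{\lambda(t)}$ is nonexpansive with $p$ as a common fixed point (so $\langle x_t - S\Theta_{\lambda(t)}(x_t), x_t - p \rangle \geq \|x_t - S\Theta_{\lambda(t)}(x_t)\|^2 \geq 0$, via the standard inequality for nonexpansive maps); rewriting as $\langle f(x_t) - x_t, x_t - p\rangle \geq 0$ and then $\langle f(x_t) - x_t, \bar x - p\rangle + \langle f(x_t) - x_t, x_t - \bar x\rangle \geq 0$, passing to the limit along the subsequence (using $x_{t_n} \rightharpoonup \bar x$ and boundedness), and exploiting that $\|x_{t_n} - \bar x\|^2 \leq \langle f(\bar x) - \bar x, \bar x - p\rangle + o(1)$-type estimates derived from the same inequality with $p = \bar x$ (this is where weak convergence upgrades to strong), one obtains both $x_{t_n} \to \bar x$ strongly and $\langle f(\bar x) - \bar x, x - \bar x \rangle \leq 0$ for all $x \in \Omega$, i.e. $\bar x$ solves (VP). By uniqueness of the solution to (VP), $\bar x = q^\ast$, and since the subsequence was arbitrary, the whole net $\{x_t\}$ converges strongly to $q^\ast$ as $t \to 0^+$.
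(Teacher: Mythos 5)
Your proposal is correct, and the first two-thirds (Banach fixed point for existence/uniqueness, boundedness via a point of $\Omega$, the estimate $\|Ax_t-Ap\|^2=O(t)$ from inverse strong monotonicity, $\|x_t-\Theta_{\lambda(t)}x_t\|\to 0$ from firm nonexpansiveness of $P_Q$, $\|x_t-Sx_t\|\to 0$, and the two-stage subsequence extraction handling the varying $\lambda(t_n)$) coincide with the paper's argument. Where you genuinely diverge is the endgame. The paper first proves the key inequality $\limsup_{t\to 0^+}\langle f(q^\ast)-q^\ast,x_t-q^\ast\rangle\le 0$ (using that any weak cluster point lies in $\Omega$ together with the variational characterization of the already-known $q^\ast$), and then closes the loop by writing $x_t-q^\ast=\bigl(T_{t,\lambda(t)}(x_t)-T_{t,\lambda(t)}(q^\ast)\bigr)+t(f(q^\ast)-q^\ast)$ and applying $\|u+v\|^2\le\|u\|^2+2\langle v,u+v\rangle$ with the contraction coefficient $1-\sigma t$ of $T_{t,\lambda(t)}$. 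You instead derive from the fixed-point equation the inequality $\langle f(x_t)-x_t,\,x_t-p\rangle\ge 0$ for all $p\in\Omega$ (via the obtuse-angle property of a nonexpansive map at its fixed points), take $p=\bar x$ to get $(1-\rho)\|x_{t_n}-\bar x\|^2\le\langle f(\bar x)-\bar x,\,x_{t_n}-\bar x\rangle\to 0$, upgrading weak to strong convergence, and then identify $\bar x=q^\ast$ by uniqueness of (VP). Both are standard viscosity-approximation closings; the paper's version has the advantage that the same $\limsup$ inequality and the same algebraic estimate are recycled verbatim for the explicit scheme in Theorem \ref{Theo2}, while yours is slightly more self-contained in that it shows directly that every weak cluster point solves (VP) without presupposing which point is the limit. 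One small correction: for a merely nonexpansive $T$ with fixed point $p$ the inequality is $\langle x-Tx,\,x-p\rangle\ge\tfrac12\|x-Tx\|^2$, not $\ge\|x-Tx\|^2$ (the latter is the firmly nonexpansive case); since you only use nonnegativity of this inner product, the slip is harmless.
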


The following simple lemma, which is an immediate consequence of the second
assertion of Lemma \ref{L2}, will be very useful in the proof of the
previous theorem and also in the proof of the main result of the next section.

\begin{lemma}
\label{L5}Let $t\in]0,1]$ and $\mu\in\lbrack0,2\nu].$ Then the application $%
T_{t,\mu}:Q\rightarrow Q$ defined by%
\begin{equation*}
T_{t,\mu}(x)=tf(x)+(1-t)SP_{Q}(x-\mu Ax),
\end{equation*}
satisfies%
\begin{equation*}
\left\Vert T_{t,\mu}(x)-T_{t,\mu}(y)\right\Vert \leq(1-\sigma t)\left\Vert
x-y\right\Vert ,~\forall x,y\in Q,
\end{equation*}
where $\sigma=1-\rho.$
\end{lemma}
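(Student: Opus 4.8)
The plan is to prove Lemma \ref{L5} directly by estimating the difference $T_{t,\mu}(x)-T_{t,\mu}(y)$ via the triangle inequality, splitting off the $f$-term and the $S\circ\Theta_\mu$-term. First I would write
\begin{equation*}
\left\Vert T_{t,\mu}(x)-T_{t,\mu}(y)\right\Vert \leq t\left\Vert f(x)-f(y)\right\Vert +(1-t)\left\Vert SP_{Q}(x-\mu Ax)-SP_{Q}(y-\mu Ay)\right\Vert .
\end{equation*}
The first term is bounded by $t\rho\left\Vert x-y\right\Vert$ using (A4). For the second term, the key observation is that $SP_{Q}(x-\mu Ax)=S(\Theta_\mu(x))$ where $\Theta_\mu=P_Q\circ(I-\mu A)$, so this is the composition of the nonexpansive map $S$ (assumption (A2)) with the map $\Theta_\mu$, which is nonexpansive by assertion (ii) of Lemma \ref{L2} since $\mu\in[0,2\nu]$. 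Hence the composition is nonexpansive and the second term is bounded by $(1-t)\left\Vert x-y\right\Vert$.

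Combining the two estimates gives
\begin{equation*}
\left\Vert T_{t,\mu}(x)-T_{t,\mu}(y)\right\Vert \leq t\rho\left\Vert x-y\right\Vert +(1-t)\left\Vert x-y\right\Vert =(1-t(1-\rho))\left\Vert x-y\right\Vert =(1-\sigma t)\left\Vert x-y\right\Vert ,
\end{equation*}
which is exactly the claimed bound with $\sigma=1-\rho$. One should also note that $T_{t,\mu}$ indeed maps $Q$ into $Q$: $f(x)\in Q$ by (A4), $SP_Q(x-\mu Ax)\in Q$ since $P_Q$ maps into $Q$ and $S$ maps $Q$ into $Q$, and $Q$ is convex, so the convex combination $tf(x)+(1-t)SP_Q(x-\mu Ax)$ lies in $Q$.

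There is essentially no obstacle here: the only subtlety is making sure the edge cases $\mu=0$ and $\mu=2\nu$ are covered by Lemma \ref{L2}, which is stated for $\lambda\in\,]0,2\nu]$; the case $\mu=0$ gives $\Theta_0=P_Q$, which is nonexpansive by (\ref{h2}), so the argument goes through for all $\mu\in[0,2\nu]$. The whole proof is a three-line triangle-inequality computation relying on Lemma \ref{L2}(ii) and assumptions (A2) and (A4).
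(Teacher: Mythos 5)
Your proof is correct and is exactly the argument the paper intends: the paper gives no explicit proof, stating only that the lemma is ``an immediate consequence of the second assertion of Lemma \ref{L2},'' which is precisely your triangle-inequality decomposition into the $t\rho$-contraction term and the $(1-t)$-nonexpansive term. Your extra remark handling the edge case $\mu=0$ (where $\Theta_{0}=P_{Q}$ is nonexpansive by (\ref{h2})) is a small but genuine improvement in care over the paper, which states Lemma \ref{L2} only for $\lambda\in\,]0,2\nu]$.
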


Let us start the proof of Theorem \ref{Theo1}.

\begin{proof}
Let $t\in]0,1].$ According to Lemma \ref{L5} and the classical Banach fixed
point theorem, there exists a unique $x_{t}\in Q$ such that $%
x_{t}=T_{t,\lambda(t)}(x_{t}).$ Let us now prove that the family $%
\{x_{t}\}_{0<t\leq1}$ is bounded in $\mathcal{H}.$ Let $q\in\Omega.$ In view
of the last assertion of Lemma \ref{L2},%
\begin{align}
T_{t,\lambda(t)}(q) & =t~f(q)+(1-t)Sq  \notag \\
& =t~f(q)+(1-t)q  \label{R}
\end{align}
for every $t\in]0,1].$ Hence, Lemma \ref{L5} yields%
\begin{align*}
\left\Vert x_{t}-q\right\Vert & \leq\left\Vert
T_{t,\lambda(t)}(x_{t})-T_{t,\lambda(t)}(q)\right\Vert +t\left\Vert
f(q)-q\right\Vert \\
& \leq(1-\sigma t)\left\Vert x_{t}-q\right\Vert +t\left\Vert
f(q)-q\right\Vert ,
\end{align*}
which implies
\begin{equation*}
\sup_{0<t\leq1}\left\Vert x_{t}-q\right\Vert \leq\frac{1}{\sigma}\left\Vert
f(q)-q\right\Vert .
\end{equation*}
Hence $\{x_{t}\}_{0<t\leq1}$ is a bounded family in $\mathcal{H}.$

In the sequel, in order to simplify the notations, we will use $M$ to denote
a real constant independent of $t\in ]0,1]$ that may change from line to
another. Moreover, $\varepsilon(t)$ will simply denotes a real quantity that
converges to $0$ as the variable $t$ tends to $0.$ By the way, let us notice
here this simple result that will be often implicitly used in the sequel:
since $\{x_{t}\}_{0<t\leq1}$ is a bounded in $\mathcal{H}$, then for every
Lipschitz continuous function $g:Q\rightarrow \mathcal{H}$ the family $%
\{g(x_{t})\}_{0<t\leq1}$ is also bounded in $\mathcal{H}$.

For $t\in]0,1],$ we set $z_{t}=P_{Q}(x_{t}-\lambda(t)Ax_{t}).$ Let $q\in
\Omega.$ Clearly, by using the classical identity
\begin{equation}
\left\Vert t u+(1-t) v\right\Vert ^{2}\leq t\left\Vert u\right\Vert
^{2}+(1-t)\left\Vert v\right\Vert ^{2}, \forall u,v\in \mathcal{H},  \label{CC}
\end{equation}
the fact that $%
S $ and $P_{Q}$ are nonexpansive operators, and Lemma \ref{L2}, we obtain%
\begin{align}
\left\Vert x_{t}-q\right\Vert ^{2} & \leq t\left\Vert f(x_{t})-q\right\Vert
^{2}+(1-t)\left\Vert Sz_{t}-q\right\Vert ^{2}  \notag \\
& \leq tM+\left\Vert z_{t}-q\right\Vert ^{2}  \label{N} \\
& \leq tM+\left\Vert \left( x_{t}-\lambda(t)Ax_{t}\right) -\left(
q-\lambda(t)Aq\right) \right\Vert ^{2}  \notag \\
& \leq tM+\left\Vert x_{t}-q\right\Vert ^{2}-\lambda(t)(2\nu-\lambda
(t))\left\Vert Ax_{t}-Aq\right\Vert ^{2}.  \notag
\end{align}
We then deduce that%
\begin{equation}
a(b-2\nu)\left\Vert Ax_{t}-Aq\right\Vert ^{2}\leq tM.  \label{N1}
\end{equation}
Therefore, thanks to (\ref{h1}), we have%
\begin{align*}
\left\Vert z_{t}-q\right\Vert ^{2} & \leq\langle z_{t}-q,\left(
x_{t}-\lambda(t)Ax_{t}\right) -\left( q-\lambda(t)Aq\right) \rangle \\
& \leq\langle z_{t}-q,x_{t}-q\rangle+\lambda(t)\left\Vert z_{t}-q\right\Vert
\left\Vert Ax_{t}-Aq\right\Vert \\
& =\langle z_{t}-q,x_{t}-q\rangle+\varepsilon(t) \\
& =\frac{1}{2}\left( \left\Vert z_{t}-q\right\Vert ^{2}+\left\Vert
x_{t}-q\right\Vert ^{2}-\left\Vert x_{t}-z_{t}\right\Vert ^{2}\right)
+\varepsilon(t).
\end{align*}
The last inequality implies%
\begin{equation*}
\left\Vert z_{t}-q\right\Vert ^{2}\leq\left\Vert x_{t}-q\right\Vert
^{2}-\left\Vert x_{t}-z_{t}\right\Vert ^{2}+2\varepsilon(t).
\end{equation*}
Hence, by going back to the estimate (\ref{N}), we deduce that
\begin{equation*}
\left\Vert x_t - z_{t}\right\Vert ^{2}\leq t M+2\epsilon(t),
\end{equation*}
which implies
\begin{equation}
x_{t}-z_{t}\rightarrow0\text{ as }t\rightarrow0^{+}.  \label{N2}
\end{equation}
The last inequality in turn implies that
\begin{equation}
x_{t}-Sx_{t}\rightarrow0\text{ as }t\rightarrow0^{+}  \label{N3}
\end{equation}
Indeed,
\begin{align*}
\left\Vert x_{t}-Sx_{t}\right\Vert & \leq\left\Vert x_{t}-Sz_{t}\right\Vert
+\left\Vert Sx_{t}-Sz_{t}\right\Vert \\
& =\left\Vert T_{t,\lambda(t)}(x_{t})-Sz_{t}\right\Vert +\left\Vert
Sx_{t}-Sz_{t}\right\Vert \\
& \leq t\left\Vert f(x_{t})-Sz_{t}\right\Vert +\left\Vert
x_{t}-z_{t}\right\Vert \\
& \leq tM+\left\Vert x_{t}-z_{t}\right\Vert .
\end{align*}
Now we are in position to prove the following key result:%
\begin{equation}
\kappa:=\lim\sup_{t\rightarrow0^{+}}\langle
f(q^{\ast})-q^{\ast},x_{t}-q^{\ast}\rangle\leq0,  \label{N4}
\end{equation}
where $q^{\ast}$ is the unique solution of the variational problem (VP).
\par\noindent From the definition of $\kappa,$ there exists a sequence $\{t_{n}\}$ in $%
]0,1]$ converging to $0$ such that%
\begin{equation*}
\kappa=\lim_{n\rightarrow+\infty}\langle
f(q^{\ast})-q^{\ast},x_{t_{n}}-q^{\ast}\rangle.
\end{equation*}
On the other hand, since the family $\{x_{t_{n}}\}_{0<t\leq1}$ is a bounded
subset of the closed and convex subset $Q$ of $\mathcal{H}$, we can assume,
up to a subsequence, that $\{x_{t_{n}}\}$ converges weakly in $\mathcal{H}$
to some $x_{\infty}\in Q.$ This fact implies%
\begin{equation*}
\kappa=\langle f(q^{\ast})-q^{\ast},x_{\infty}-q^{\ast}\rangle.
\end{equation*}
Therefore, in order to prove that $\kappa\leq0,$ we just need to verify that
$x_{\infty}\in\Omega.$ Firstly, from Lemma \ref{L3} and (\ref{N3}), we have $%
x_{\infty}\in F_{ix}(S).$ Secondly, up to a subsequence, we can assume that
the real sequence $\{\lambda_{t_{n}}\}$ converges to some real $%
\lambda^{\ast }$ which belongs to $]0,2\nu\lbrack.$ Let $\Theta_{\lambda^{%
\ast}}=P_{Q}\circ(I-\lambda^{\ast}A)$ be the nonexpansive operator
introduced in Lemma \ref{L3}. Since $z_{n}=\Theta_{\lambda_{n}}(x_{t_{n}}),$
we have
\begin{align*}
\left\Vert x_{t_{n}}-\Theta_{\lambda^{\ast}}(x_{t_{n}})\right\Vert &
\leq\left\Vert x_{t_{n}}-z_{t_{n}}\right\Vert +\left\Vert
\Theta_{\lambda_{n}}(x_{t_{n}})-\Theta_{\lambda^{\ast}}(x_{t_{n}})\right\Vert
\\
& \leq\left\Vert x_{t_{n}}-z_{t_{n}}\right\Vert +\left\vert
\lambda_{t_{n}}-\lambda^{\ast}\right\vert \left\Vert Ax_{t_{n}}\right\Vert \\
& \leq\left\Vert x_{t_{n}}-z_{t_{n}}\right\Vert +\left\vert
\lambda_{t_{n}}-\lambda^{\ast}\right\vert M.
\end{align*}
Hence, by combining (\ref{N2}) and Lemma \ref{L3}, we deduce that $%
x_{\infty} $ is a fixed point of $\Theta_{\lambda^{\ast}}.$ Thus, thanks to
the second assertion of Lemma \ref{L2}, we deduce that $x_{\infty}\in VI(A,Q).$
The claim (\ref{N4}) is then proved.

Let us finally prove that $x_{t}\rightarrow q^{\ast}$ in $\mathcal{H}$ as $t$
goes to $0^{+}.$ Let $t\in]0,1].$ First, from the identity (\ref{R}), we have%
\begin{equation*}
x_{t}-q^{\ast}=u+v,
\end{equation*}
with%
\begin{align*}
u & =T_{t,\lambda(t)}(x_{t})-T_{t,\lambda(t)}\left( q^{\ast}\right) , \\
v & =t(f(q^{\ast})-q^{\ast}).
\end{align*}
Hence, by applying the inequality%
\begin{equation*}
\left\Vert u+v\right\Vert ^{2}\leq\left\Vert u\right\Vert ^{2}+2\langle
v,u+v\rangle
\end{equation*}
and Lemma \ref{L5}, we get the inequality%
\begin{equation*}
\left\Vert x_{t}-q^{\ast}\right\Vert ^{2}\leq(1-\sigma t)^{2}\left\Vert
x_{t}-q^{\ast}\right\Vert ^{2}+2t\langle f(q^{\ast})-q^{\ast},x_{t}-q^{\ast
}\rangle
\end{equation*}
which implies
\begin{align*}
\left\Vert x_{t}-q^{\ast}\right\Vert ^{2} & \leq\frac{\sigma t}{2}\left\Vert
x_{t}-q^{\ast}\right\Vert ^{2}+\frac{1}{\sigma}\langle f(q^{\ast})-q^{\ast
},x_{t}-q^{\ast}\rangle \\
& \leq tM+\frac{1}{\sigma}\langle f(q^{\ast})-q^{\ast},x_{t}-q^{\ast}\rangle.
\end{align*}
Hence, by letting $t\rightarrow0^{+}$ and using (\ref{N4}), we obtain the
desired result.
\end{proof}

\section{The convergence of the explicit algorithm (\protect\ref{Al3}).}

In this section, we study the strong convergence property of the process (\ref%
{Al3}). We prove the following theorem.

\begin{theorem}
\label{Theo2}Let $\{\alpha_{n}\}\in]0,1]$ and $\{\lambda_{n}\}\in\lbrack
0,2\nu]$ two real sequences such that:

\begin{enumerate}
\item[(i)] $\alpha_{n}\rightarrow0$ and $\sum_{n=0}^{+\infty}\alpha
_{n}=+\infty$

\item[(ii)] $0<\lim\inf_{n\rightarrow+\infty}\lambda_{n}\leq\lim
\sup_{n\rightarrow+\infty}\lambda_{n}<2\nu.$

\item[(iii)] $\frac{\alpha_{n+1}-\alpha_{n}}{\alpha_{n}}\rightarrow0$ or $%
\sum_{n=0}^{+\infty}\left\vert \alpha_{n+1}-\alpha_{n}\right\vert <\infty.$

\item[(iv)] $\frac{\lambda_{n+1}-\lambda_{n}}{\alpha_{n}}\rightarrow0$ or $%
\sum_{n=0}^{+\infty}\left\vert \lambda_{n+1}-\lambda_{n}\right\vert <\infty.$
\end{enumerate}

Then for every initial data $x_{1}\in Q$, the sequence $\{x_{n}\}$ generated
by the iterative process%
\begin{equation*}
x_{n+1}=\alpha_{n}f(x_{n})+(1-\alpha_{n})SP_{Q}(x_{n}-\lambda_{n}Ax_{n}),~n%
\geq1,
\end{equation*}
converges strongly in $\mathcal{H}$ to $q^{\ast}$ the unique solution of the
variational inequality problem (VP).
\end{theorem}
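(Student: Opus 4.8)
The plan is to follow the standard three-part scheme for viscosity iterations, using Theorem \ref{Theo1} to supply the "anchor" point $q^{\ast}$ and Lemma \ref{L4} to close the argument. First I would establish boundedness of $\{x_{n}\}$: writing $y_{n}=P_{Q}(x_{n}-\lambda_{n}Ax_{n})$ and picking $q\in\Omega$, Lemma \ref{L2}(ii) gives $\|Sy_{n}-q\|\le\|y_{n}-q\|\le\|x_{n}-q\|$, so the map $x\mapsto\alpha_{n}f(x)+(1-\alpha_{n})Sy$ behaves like a $(1-\sigma\alpha_{n})$-contraction in the spirit of Lemma \ref{L5}, yielding $\|x_{n+1}-q\|\le(1-\sigma\alpha_{n})\|x_{n}-q\|+\alpha_{n}\|f(q)-q\|$; an induction then bounds $\|x_{n}-q\|$ by $\max\{\|x_{1}-q\|,\sigma^{-1}\|f(q)-q\|\}$. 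Consequently $\{f(x_{n})\}$, $\{Ax_{n}\}$, $\{y_{n}\}$, $\{Sy_{n}\}$ are all bounded, and I will henceforth write $M$ for a generic constant as the author does.

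Second, I would prove the asymptotic regularity $\|x_{n+1}-x_{n}\|\to0$. Subtracting consecutive iterates and using Lemma \ref{L2}(ii) (nonexpansiveness of $\Theta_{\lambda}$ for the common value, then correcting for the change $\lambda_{n+1}\neq\lambda_{n}$ via $\|\Theta_{\lambda_{n+1}}x-\Theta_{\lambda_{n}}x\|\le|\lambda_{n+1}-\lambda_{n}|\,\|Ax\|$) gives
\begin{equation*}
\|x_{n+1}-x_{n}\|\le(1-\sigma\alpha_{n})\|x_{n}-x_{n-1}\|+M|\alpha_{n}-\alpha_{n-1}|+M|\lambda_{n}-\lambda_{n-1}|.
\end{equation*}
Hypotheses (iii)--(iv) make the last two terms either absolutely summable or $o(\alpha_{n})$, so Lemma \ref{L4} (with $\gamma_{n}=\sigma\alpha_{n}$, using $\sum\alpha_{n}=\infty$) forces $\|x_{n+1}-x_{n}\|\to0$. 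From the iteration, $\|x_{n}-Sy_{n}\|=\alpha_{n}\|f(x_{n})-Sy_{n}\|\le\alpha_{n}M\to0$, hence $\|x_{n+1}-Sy_{n}\|\to0$ as well. To get $\|x_{n}-Sx_{n}\|\to0$ I also need $\|x_{n}-y_{n}\|\to0$: squaring the iteration with the convexity inequality \eqref{CC} and Lemma \ref{L2}(i) gives $\|x_{n+1}-q\|^{2}\le\alpha_{n}M+\|x_{n}-q\|^{2}-\lambda_{n}(2\nu-\lambda_{n})\|Ax_{n}-Aq\|^{2}$, and since $\|x_{n+1}-q\|^{2}-\|x_{n}-q\|^{2}\to0$ (because $\|x_{n+1}-x_{n}\|\to0$ and the sequence is bounded) and $\liminf\lambda_{n}(2\nu-\lambda_{n})>0$ by (ii), we get $\|Ax_{n}-Aq\|\to0$; feeding this back through the firm-nonexpansiveness estimate \eqref{h1} exactly as in the proof of Theorem \ref{Theo1} yields $\|x_{n}-y_{n}\|\to0$, and then $\|x_{n}-Sx_{n}\|\le\|x_{n}-Sy_{n}\|+\|y_{n}-x_{n}\|\to0$.

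Third, I would show $\limsup_{n}\langle f(q^{\ast})-q^{\ast},x_{n}-q^{\ast}\rangle\le0$. Take a subsequence $\{x_{n_{k}}\}$ realizing the $\limsup$ and, by boundedness, weakly convergent to some $x_{\infty}\in Q$; the demiclosedness Lemma \ref{L3} with $\|x_{n}-Sx_{n}\|\to0$ gives $x_{\infty}\in F_{ix}(S)$, and passing to a further subsequence with $\lambda_{n_{k}}\to\lambda^{\ast}\in\,]0,2\nu[$ (possible by (ii)), the same comparison $\|x_{n}-\Theta_{\lambda^{\ast}}x_{n}\|\le\|x_{n}-y_{n}\|+|\lambda_{n}-\lambda^{\ast}|M\to0$ together with Lemma \ref{L3} and Lemma \ref{L2}(ii) gives $x_{\infty}\in S_{VI(A,Q)}$; hence $x_{\infty}\in\Omega$ and the $\limsup$ equals $\langle f(q^{\ast})-q^{\ast},x_{\infty}-q^{\ast}\rangle\le0$ by the characterization (VP). Finally, writing $x_{n+1}-q^{\ast}=u_{n}+v_{n}$ with $u_{n}=T_{\alpha_{n},\lambda_{n}}(x_{n})-T_{\alpha_{n},\lambda_{n}}(q^{\ast})$ (using the identity $T_{\alpha_{n},\lambda_{n}}(q^{\ast})=\alpha_{n}f(q^{\ast})+(1-\alpha_{n})q^{\ast}$ from Lemma \ref{L2}(ii) since $q^{\ast}\in\Omega$) and $v_{n}=\alpha_{n}(f(q^{\ast})-q^{\ast})$, the inequality $\|u+v\|^{2}\le\|u\|^{2}+2\langle v,u+v\rangle$ and Lemma \ref{L5} give
\begin{equation*}
\|x_{n+1}-q^{\ast}\|^{2}\le(1-\sigma\alpha_{n})\|x_{n}-q^{\ast}\|^{2}+2\alpha_{n}\langle f(q^{\ast})-q^{\ast},x_{n+1}-q^{\ast}\rangle,
\end{equation*}
which is exactly the recursion \eqref{B} of Lemma \ref{L4} with $a_{n}=\|x_{n}-q^{\ast}\|^{2}$, $\gamma_{n}=\sigma\alpha_{n}$, $r_{n}=\frac{2}{\sigma}\langle f(q^{\ast})-q^{\ast},x_{n+1}-q^{\ast}\rangle$ and $\delta_{n}=0$; applying that lemma gives $x_{n}\to q^{\ast}$ strongly. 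The main obstacle is the combined bookkeeping of hypotheses (iii)--(iv): in each of the four cases one must verify that the perturbation terms fit the hypotheses of Lemma \ref{L4} — either absolutely summable ($\delta_{n}$) or of the form $\gamma_{n}r_{n}$ with $r_{n}\to0$ — and this case analysis, while routine, is where a careless estimate would break the proof.
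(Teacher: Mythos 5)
Your proposal is correct and follows essentially the same route as the paper's proof: boundedness by induction, asymptotic regularity of $\|x_{n+1}-x_{n}\|$ via Lemma \ref{L4}, then $Ax_{n}\to Aq$ and $x_{n}-z_{n}\to 0$ via the firm nonexpansiveness of $P_{Q}$, the demiclosedness argument for the $\limsup$, and a final application of Lemma \ref{L4}. The only blemish is a harmless index slip: the iteration gives $x_{n+1}-Sy_{n}=\alpha_{n}\left(f(x_{n})-Sy_{n}\right)$, not $x_{n}-Sy_{n}=\alpha_{n}\left(f(x_{n})-Sy_{n}\right)$, which changes nothing since $\|x_{n+1}-x_{n}\|\to 0$.
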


\begin{proof}
Since we are only interested on the asymptotic behavior of the sequence $%
\{x_{n}\},$ we can replace hypothesis (ii) by the stronger one: there exist
two real $a$ and $b$ in $]0,2\nu\lbrack$ such that the sequence $\{\lambda
_{n}\}$ is in $[a,b].$

For every $n\in\mathbb{N},$ we set $T_{n}:=T_{\alpha_{n},\lambda_{n}}$ where
$T_{\alpha_{n},\lambda_{n}}$ is the application defined by Lemma \ref{L5}.
First, we will prove that the sequence $\{x_{n}\}$ is bounded in $\mathcal{H}%
.$ Let $q\in\Omega.$ Thanks to the identity (\ref{R}) and Lemma \ref{L5}, we
have
\begin{align*}
\left\Vert x_{n+1}-q\right\Vert & \leq\left\Vert
T_{n}(x_{n})-T_{n}(q)\right\Vert +\alpha_n\left\Vert f(q)-q\right\Vert \\
& \leq(1-\sigma\alpha_{n})\left\Vert x_{n}-q\right\Vert +\alpha_n\left\Vert
f(q)-q\right\Vert \\
& \leq\max\{\left\Vert x_{n}-q\right\Vert ,\frac{1}{\sigma}\left\Vert
f(q)-q\right\Vert \}.
\end{align*}
Hence, we deduce by induction that
\begin{equation*}
\left\Vert x_{n}-q\right\Vert \leq\max\{\left\Vert x_{0}-q\right\Vert ,\frac{%
1}{\sigma}\left\Vert f(q)-q\right\Vert \},\forall n\in\mathbb{N}.
\end{equation*}
Therefore $\{x_{n}\}$ is bounded in $\mathcal{H}.$ Hence, for every
Lipschitz function $g:Q\rightarrow \mathcal{H},$ the sequence $\{g(x_{n})\}$
is also bounded in $\mathcal{H}.$

From hereon, as we have done in the proof of Theorem \ref{Theo1}, $M$ will
denotes a constant independent of $n$ and $\{\varepsilon_{n}\}$ a real
sequence that converges to $0.$ $M$ and $\{\varepsilon_{n}\}$ may change
from line to an other.

Let us now show that the sequence $\{\Delta x_{n}=x_{n+1}-x_{n}\}$ converges
strongly to $0.$ For every $n\in \mathbb{N},$ we clearly have
\begin{align*}
\left\Vert \Delta x_{n}\right\Vert & \leq \left\Vert
T_{n}(x_{n})-T_{n}(x_{n-1})\right\Vert +\left\Vert
T_{n}(x_{n-1})-T_{n-1}(x_{n-1})\right\Vert \\
& \leq (1-\sigma \alpha _{n})\left\Vert \Delta x_{n-1}\right\Vert +M\left[
\left\vert \Delta \alpha _{n-1}\right\vert +\left\vert \Delta \lambda
_{n-1}\right\vert \right] ,
\end{align*}%
where
\begin{equation*}
\Delta \alpha _{n}=\alpha _{n+1}-\alpha _{n},
\end{equation*}%
and
\begin{equation*}
\Delta \lambda _{n}=\lambda _{n+1}-\lambda _{n}.
\end{equation*}%
Hence, by applying Lemma \ref{L4}, we deduce that
\begin{equation}
\left\Vert \Delta x_{n}\right\Vert \rightarrow 0\text{ as }n\rightarrow
\infty .  \label{A1}
\end{equation}

For every $n\in \mathbb{N},$ we set $z_{n}=P_{Q}(x_{n}-\lambda _{n}Ax_{n}).$
Let $q\in \Omega .$ As we have proceeded in the proof of Theorem \ref{Theo1}%
, by using the classical identity (\ref{CC}) with $t=\alpha_n$, the fact that $S$ and $P_{Q}$
are nonexpansive operators, and Lemma \ref{L2}, we get
\begin{align}
\left\Vert x_{n+1}-q\right\Vert ^{2}& \leq \alpha _{n}\left\Vert
f(x_{n})-q\right\Vert ^{2}+(1-\alpha _{n})\left\Vert Sz_{n}-q\right\Vert ^{2}
\notag \\
& \leq \varepsilon _{n}+\left\Vert z_{n}-q\right\Vert ^{2}  \label{A} \\
& \leq \varepsilon _{n}+\left\Vert \left( x_{n}-\lambda _{n}Ax_{n}\right)
-\left( q-\lambda _{n}Aq\right) \right\Vert ^{2}  \notag \\
& \leq \varepsilon _{n}+\left\Vert x_{n}-q\right\Vert ^{2}-\lambda _{n}(2\nu
-\lambda _{n})\left\Vert Ax_{n}-Aq\right\Vert ^{2}.  \notag
\end{align}%
Therefore, we have%
\begin{align*}
a(2\nu -b)\left\Vert Ax_{n}-Aq\right\Vert ^{2}& \leq \varepsilon
_{n}+\left\Vert x_{n}-q\right\Vert ^{2}-\left\Vert x_{n+1}-q\right\Vert ^{2}
\\
& = \varepsilon _{n}-\langle\Delta x_{n},
x_{n+1}+x_{n}-2q\rangle \\
& \leq \varepsilon _{n}+\left\Vert \Delta x_{n}\right\Vert \left\Vert
x_{n+1}+x_{n}-2q\right\Vert \\
& \leq \varepsilon _{n}+M\left\Vert \Delta x_{n}\right\Vert .
\end{align*}%
Hence, thanks to (\ref{A1}), we deduce that%
\begin{equation}
Ax_{n}-Aq\rightarrow 0\text{ as }n\rightarrow \infty .  \label{A2}
\end{equation}%
Therefore, by using the fact that the operator $P_{Q}$ is firmly
nonexpansive (see (\ref{h1})), we get%
\begin{align*}
\left\Vert z_{n}-q\right\Vert ^{2}& \leq \langle z_{n}-q,\left(
x_{n}-\lambda _{n}Ax_{n}\right) -\left( q-\lambda _{n}Aq\right) \rangle \\
& \leq \langle z_{n}-q,x_{n}-q\rangle +\lambda _{n}\left\Vert
z_{n}-q\right\Vert \left\Vert Ax_{n}-Aq\right\Vert \\
& =\langle z_{n}-q,x_{n}-q\rangle +\varepsilon _{n} \\
& =\frac{1}{2}\left( \left\Vert z_{n}-q\right\Vert ^{2}+\left\Vert
x_{n}-q\right\Vert ^{2}-\left\Vert z_{n}-x_{n}\right\Vert ^{2}\right)
+\varepsilon _{n}.
\end{align*}%
Thus, we obtain%
\begin{equation*}
\left\Vert z_{n}-q\right\Vert ^{2}\leq \left\Vert x_{n}-q\right\Vert
^{2}-\left\Vert z_{n}-x_{n}\right\Vert ^{2}+\varepsilon _{n}.
\end{equation*}%
Inserting this inequality into (\ref{A}) yields%
\begin{align*}
\left\Vert z_{n}-x_{n}\right\Vert ^{2}& \leq \left\Vert x_{n}-q\right\Vert
^{2}-\left\Vert x_{n+1}-q\right\Vert ^{2}+\varepsilon _{n} \\
& =-\langle\Delta x_{n}, x_{n+1}+x_{n}-2q\rangle+\varepsilon _{n} \\
& \leq M\left\Vert \Delta x_{n}\right\Vert +\varepsilon _{n}.
\end{align*}%
Hence, by using (\ref{A1}), we deduce that%
\begin{equation*}
x_{n}-z_{n}\rightarrow 0\text{ as }n\rightarrow \infty .
\end{equation*}%
Therefore, by proceeding exactly as in the proof of Theorem \ref{Theo1}, we
first infer that
\begin{equation*}
x_{n}-Sx_{n}\rightarrow 0\text{ as }n\rightarrow \infty ,
\end{equation*}%
then we deduce the key result:%
\begin{equation}
\lim \sup_{n\rightarrow \infty }\langle f(q^{\ast })-q^{\ast },x_{n}-q^{\ast
}\rangle \leq 0.  \label{A3}
\end{equation}%
Let us finally prove that the sequence $\{x_{n}\}$ converges strongly in $%
\mathcal{H}$ to $q^{\ast }.$
For every $n\in \mathbb{N},$%
\begin{equation*}
x_{n+1}-q^{\ast }=T_{n}(x_{n})-T_{n}(q^{\ast })+\alpha _{n}(f(q^{\ast
})-q^{\ast }).
\end{equation*}%
Hence, by using the inequality
\begin{equation*}
\left\Vert u+v\right\Vert ^{2}\leq \left\Vert u\right\Vert ^{2}+2\langle
v,u+v\rangle ,
\end{equation*}%
with%
\begin{align*}
u& =T_{n}(x_{n})-T_{n}(q^{\ast }), \\
v& =\alpha _{n}(f(q^{\ast })-q^{\ast }),
\end{align*}%
we obtain%
\begin{align*}
\left\Vert x_{n+1}-q^{\ast }\right\Vert ^{2}& \leq \left\Vert
T_{n}(x_{n})-T_{n}(q^{\ast })\right\Vert ^{2}+2\alpha _{n}\langle f(q^{\ast
})-q^{\ast },x_{n+1}-q^{\ast }\rangle \\
& \leq (1-\sigma \alpha _{n})^{2}\left\Vert x_{n}-q^{\ast }\right\Vert
^{2}+2\alpha _{n}\langle f(q^{\ast })-q^{\ast },x_{n+1}-q^{\ast }\rangle \\
& \leq (1-2\sigma \alpha _{n})\left\Vert x_{n}-q^{\ast }\right\Vert
^{2}+\alpha _{n}\left[ 2\langle f(q^{\ast })-q^{\ast },x_{n+1}-q^{\ast
}\rangle +M\alpha _{n}\right] .
\end{align*}%
Therefore, by applying Lemma \ref{L4} and using the key result (\ref{A3}),
we deduce that the sequence $\{x_{n}\}$ converges strongly in $\mathcal{H}$
to $q^{\ast }.$ The proof is then achieved.
\end{proof}

Now we prove that the algorithm (\ref{Al3}) is stable under small
perturbations. Precisely, we establish the following result.

\begin{theorem} \label{Th42} Let $\{\alpha_{n}\}\in]0,1]$, $\{\lambda_{n}\}\in\lbrack
0,2\nu]$ and $\{e_{n}\}\in \mathcal{H}$ three sequences such that:

\begin{enumerate}
\item[(i)] $\alpha _{n}\rightarrow 0$ and $\sum_{n=0}^{+\infty }\alpha
_{n}=+\infty $

\item[(ii)] $0<\lim \inf_{n\rightarrow +\infty }\lambda _{n}\leq \lim
\sup_{n\rightarrow +\infty }\lambda _{n}<2\nu .$

\item[(iii)] $\frac{\alpha _{n+1}-\alpha _{n}}{\alpha _{n}}\rightarrow 0$ or
$\sum_{n=0}^{+\infty }\left\vert \alpha _{n+1}-\alpha _{n}\right\vert
<\infty .$

\item[(iv)] $\frac{\lambda _{n+1}-\lambda _{n}}{\alpha _{n}}\rightarrow 0$
or $\sum_{n=0}^{+\infty }\left\vert \lambda _{n+1}-\lambda _{n}\right\vert
<\infty .$

\item[(v)] $\frac{\left\Vert e_{n}\right\Vert }{\alpha _{n}}\rightarrow 0$
or $\sum_{n=0}^{+\infty }\left\Vert e_{n}\right\Vert <\infty .$
\end{enumerate}

Then for every initial data $x_{1}\in Q$, the sequence $\{x_{n}\}$ generated
by the iterative process%
\begin{equation}
x_{n+1}=P_{Q}(\alpha _{n}f(x_{n})+(1-\alpha _{n})SP_{Q}(x_{n}-\lambda
_{n}Ax_{n})+e_{n}),~n\geq 1,  \label{rr}
\end{equation}%
converges strongly in $\mathcal{H}$ to $q^{\ast }$ the unique solution of
the variational inequality problem (VP)
\end{theorem}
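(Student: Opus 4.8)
The plan is to reduce Theorem \ref{Th42} to Theorem \ref{Theo2} by viewing the perturbed sequence $\{x_n\}$ as a perturbation of an auxiliary unperturbed sequence $\{y_n\}$ driven by the \emph{same} parameters, or—more directly—by running the convergence proof of Theorem \ref{Theo2} again while carrying the extra error term $e_n$ through each estimate. I would adopt the latter route since it is self-contained and the error terms are mild. Write $w_n=\alpha_n f(x_n)+(1-\alpha_n)SP_Q(x_n-\lambda_n Ax_n)=T_n(x_n)$, so that $x_{n+1}=P_Q(w_n+e_n)$. Because $P_Q$ is nonexpansive (Lemma \ref{L1}) and $q^\ast\in\Omega\subset Q$ so $P_Q(q^\ast)=q^\ast$, we get for any $q\in\Omega$
\begin{equation*}
\|x_{n+1}-q\|\le \|w_n+e_n-q\|\le \|T_n(x_n)-T_n(q)\|+\alpha_n\|f(q)-q\|+\|e_n\|\le(1-\sigma\alpha_n)\|x_n-q\|+\alpha_n\|f(q)-q\|+\|e_n\|,
\end{equation*}
using the identity $T_n(q)=\alpha_n f(q)+(1-\alpha_n)q$ from (\ref{R}) and Lemma \ref{L5}. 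Under hypothesis (v) (in either of its forms, $\|e_n\|=\alpha_n\varepsilon_n$ or $\sum\|e_n\|<\infty$) this inequality fits the Berstrekas–Xu Lemma \ref{L4} with $\delta_n=\|e_n\|$ when summable, or is absorbed into the $\gamma_n r_n$ term otherwise; either way boundedness of $\{x_n\}$ follows, hence $\{f(x_n)\}$, $\{Ax_n\}$, etc. are bounded and we may reuse the convention that $M$ denotes a generic constant and $\varepsilon_n\to0$ a generic null sequence, now also allowed to absorb $\|e_n\|/\alpha_n$.

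Next I would redo the three analytic steps of the proof of Theorem \ref{Theo2}. First, for $\Delta x_n=x_{n+1}-x_n$: since $P_Q$ is nonexpansive,
\begin{equation*}
\|\Delta x_n\|\le\|T_n(x_n)-T_{n-1}(x_{n-1})\|+\|e_n-e_{n-1}\|\le(1-\sigma\alpha_n)\|\Delta x_{n-1}\|+M(|\Delta\alpha_{n-1}|+|\Delta\lambda_{n-1}|)+\|e_n\|+\|e_{n-1}\|,
\end{equation*}
and Lemma \ref{L4} again gives $\|\Delta x_n\|\to0$ (hypotheses (iii),(iv),(v) supply the required smallness, with the non-summable alternatives contributing $\alpha_n\varepsilon_n$ terms). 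Second, the squared estimate: using the convexity identity (\ref{CC}) on $w_n$ together with $\|x_{n+1}-q\|^2=\|P_Q(w_n+e_n)-P_Q(q)\|^2\le\|w_n-q\|^2+2\langle e_n,w_n+e_n-q\rangle\le\|w_n-q\|^2+M\|e_n\|$, one reaches exactly the chain displayed after (\ref{A}) in the proof of Theorem \ref{Theo2}, with an extra additive $M\|e_n\|=\varepsilon_n$. Hence $Ax_n-Aq\to0$, then (via firm nonexpansiveness of $P_Q$, (\ref{h1})) $x_n-z_n\to0$ where $z_n=P_Q(x_n-\lambda_nAx_n)$, then $x_n-Sx_n\to0$ as in Theorem \ref{Theo1}, and finally the key inequality
\begin{equation*}
\limsup_{n\to\infty}\langle f(q^\ast)-q^\ast,x_n-q^\ast\rangle\le0
\end{equation*}
follows verbatim from Lemma \ref{L3} (demiclosedness) applied along a weakly convergent subsequence, since that argument used only $x_n-Sx_n\to0$, $x_n-z_n\to0$, and boundedness, all of which we have re-established.

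For the final contraction step, write $x_{n+1}-q^\ast=P_Q(w_n+e_n)-P_Q(q^\ast)$ and estimate $\|x_{n+1}-q^\ast\|^2\le\|w_n+e_n-q^\ast\|^2$. Set $u=T_n(x_n)-T_n(q^\ast)$ and $v=\alpha_n(f(q^\ast)-q^\ast)+e_n$, so $w_n+e_n-q^\ast=u+v$ by (\ref{R}); then $\|u+v\|^2\le\|u\|^2+2\langle v,u+v\rangle\le(1-\sigma\alpha_n)^2\|x_n-q^\ast\|^2+2\alpha_n\langle f(q^\ast)-q^\ast,x_{n+1}-q^\ast\rangle+M\|e_n\|$, which rearranges to
\begin{equation*}
\|x_{n+1}-q^\ast\|^2\le(1-2\sigma\alpha_n)\|x_n-q^\ast\|^2+2\sigma\alpha_n\,\frac{1}{2\sigma}\Big[2\langle f(q^\ast)-q^\ast,x_{n+1}-q^\ast\rangle+M\alpha_n+\tfrac{M\|e_n\|}{\alpha_n}\Big].
\end{equation*}
Applying Lemma \ref{L4} with $\gamma_n=2\sigma\alpha_n$ and $r_n$ the bracketed quantity (whose $\limsup$ is $\le0$ by the key inequality, $\alpha_n\to0$, and hypothesis (v) in its first form), respectively with $\delta_n=M\|e_n\|$ if instead $\sum\|e_n\|<\infty$, we conclude $x_n\to q^\ast$ strongly. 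The only mildly delicate point—the \textbf{main obstacle}—is bookkeeping the two alternative forms in each of hypotheses (iii), (iv), (v): in each step one must decide whether a given error contributes to $r_n$ (and hence needs $\varepsilon_n\to0$) or to $\delta_n$ (and hence needs summability), but since at every application of Lemma \ref{L4} both slots are available, the argument goes through uniformly; no genuinely new idea beyond the proof of Theorem \ref{Theo2} is required.
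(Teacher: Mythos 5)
Your argument is correct, but it takes a genuinely different route from the paper. The paper's proof is a short coupling argument: it introduces the auxiliary \emph{unperturbed} sequence $y_{1}=x_{1}$, $y_{n+1}=\alpha_{n}f(y_{n})+(1-\alpha_{n})SP_{Q}(y_{n}-\lambda_{n}Ay_{n})$, uses nonexpansiveness of $P_{Q}$ and Lemma \ref{L5} to get $\left\Vert x_{n+1}-y_{n+1}\right\Vert \leq(1-\sigma\alpha_{n})\left\Vert x_{n}-y_{n}\right\Vert +\left\Vert e_{n}\right\Vert$, concludes $\left\Vert x_{n}-y_{n}\right\Vert \rightarrow0$ by Lemma \ref{L4} (hypothesis (v) feeding either the $\gamma_{n}r_{n}$ slot or the $\delta_{n}$ slot), and then invokes Theorem \ref{Theo2} for $\{y_{n}\}$ as a black box --- exactly the first alternative you mention and then set aside. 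Your choice, re-running the whole convergence analysis of Theorem \ref{Theo2} with $e_{n}$ carried through every estimate, also works: each of your steps (boundedness, $\Delta x_{n}\rightarrow0$, $Ax_{n}-Aq\rightarrow0$, $x_{n}-z_{n}\rightarrow0$, $x_{n}-Sx_{n}\rightarrow0$, the $\limsup$ inequality, the final application of Lemma \ref{L4}) is sound, and the only point needing a word is that $w_{n}\in Q$, so $\left\Vert w_{n}+e_{n}-x_{n+1}\right\Vert\leq\left\Vert e_{n}\right\Vert$, which justifies replacing $\langle f(q^{\ast})-q^{\ast},w_{n}+e_{n}-q^{\ast}\rangle$ by $\langle f(q^{\ast})-q^{\ast},x_{n+1}-q^{\ast}\rangle$ up to $M\left\Vert e_{n}\right\Vert$ in the last step. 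The trade-off is clear: the paper's coupling is three lines but relies on the perturbation sitting \emph{outside} the nonexpansive part of the iteration so that the two trajectories can be compared termwise; your direct route is longer but self-contained and more robust (it would survive perturbations entering inside $P_{Q}(x_{n}-\lambda_{n}Ax_{n})$, where the coupling trick needs modification). One small inaccuracy: Lemma \ref{L4} concludes $a_{n}\rightarrow0$ and requires $\limsup r_{n}\leq0$, so it does not literally yield boundedness in your first display; you should instead use the $\max$-induction from the proof of Theorem \ref{Theo2}, augmented by $\sum_{k}\left\Vert e_{k}\right\Vert$ or by the boundedness of $\left\Vert e_{n}\right\Vert/\alpha_{n}$, which is routine.
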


\begin{proof}
Let $\{y_{n}\}$ the sequence defined by%
\begin{equation*}
\left\{
\begin{array}{l}
y_{1}=x_{1}, \\
y_{n+1}=\alpha _{n}f(y_{n})+(1-\alpha _{n})SP_{Q}(y_{n}-\lambda
_{n}Ay_{n}),~n\geq 1.%
\end{array}%
\right.
\end{equation*}%
Since the sequence $\{y_{n}\}$ is in $Q,$%
\begin{eqnarray*}
\left\Vert x_{n+1}-y_{n+1}\right\Vert &=&\left\Vert
x_{n+1}-P_{Q}(y_{n+1})\right\Vert \\
&\leq &\left\Vert T_{\alpha _{n},\lambda _{n}}(x_{n})-T_{\alpha _{n},\lambda
_{n}}(y_{n})\right\Vert +\left\Vert e_{n}\right\Vert ,
\end{eqnarray*}%
where $T_{\alpha _{n},\lambda _{n}}$ is the operator defined in Lemma \ref{L5}.
Hence, for every $n\geq 1,$%
\begin{equation*}
\left\Vert x_{n+1}-y_{n+1}\right\Vert \leq (1-\sigma \alpha _{n})\left\Vert
x_{n}-y_{n}\right\Vert +\left\Vert e_{n}\right\Vert ,
\end{equation*}%
where $\sigma =1-\rho .$ Therefore, by invoking Lemma \ref{B}., we deduce that
\begin{equation*}
\left\Vert x_{n}-y_{n}\right\Vert \rightarrow 0\text{ as }n\rightarrow
\infty ,
\end{equation*}%
which implies that $\{x_{n}\}$ converges strongly in $\mathcal{H}$ to $%
q^{\ast }$ since, from Theorem \ref{Theo2}, \ the sequence $\{y_{n}\}$
converges strongly in $\mathcal{H}$ to $q^{\ast }.$
\end{proof}

As a direct consequence of Theorem \ref{Th42}, we have the following
result which improves and generalizes (\cite[Theorem 5.2]{Xu2}).

\begin{corollary}
Let $\varphi :Q\longrightarrow \mathcal{H}$ be $C^{1}$ convex function such
that its gradient $\nabla \varphi :Q\longrightarrow \mathcal{H}$ is
Lipschitz with coefficient $L>0.$ We assume that the set $F_{ix}(S)\cap \arg
\min_{Q}\varphi $ is nonempty, where $\arg \min_{Q}\varphi =\{q\in Q:\varphi
(q)\leq \varphi (x)~\forall x\in Q\}.$ Let $\{\alpha _{n}\}\in ]0,1]$, $%
\{\lambda _{n}\}\in \lbrack 0,\frac{2}{L}]$ and $\{e_{n}\}\in \mathcal{H}$
three sequences such that:

\begin{enumerate}
\item[(i)] $\alpha_{n}\rightarrow0$ and $\sum_{n=0}^{+\infty}\alpha
_{n}=+\infty$

\item[(ii)] $0<\lim\inf_{n\rightarrow+\infty}\lambda_{n}\leq\lim
\sup_{n\rightarrow+\infty}\lambda_{n}<\frac{2}{L}.$

\item[(iii)] $\frac{\alpha_{n+1}-\alpha_{n}}{\alpha_{n}}\rightarrow0$ or $%
\sum_{n=0}^{+\infty}\left\vert \alpha_{n+1}-\alpha_{n}\right\vert <\infty.$

\item[(iv)] $\frac{\lambda _{n+1}-\lambda _{n}}{\alpha _{n}}\rightarrow 0$
or $\sum_{n=0}^{+\infty }\left\vert \lambda _{n+1}-\lambda _{n}\right\vert
<\infty $

\item[(v)] $\frac{\left\Vert e_{n}\right\Vert }{\alpha _{n}}\rightarrow 0$
or $\sum_{n=0}^{+\infty }\left\Vert e_{n}\right\Vert <\infty .$
\end{enumerate}

Then for every $z_{1}\in Q$ the sequence $\{z_{n}\}$ defined iteratively by%
\begin{equation}
z_{n+1}=P_{Q}(\alpha _{n}f(z_{n})+(1-\alpha _{n})SP_{Q}(z_{n}-\lambda
_{n}\nabla \varphi (z_{n}))+e_{n})),~n\geq 1,  \label{exp}
\end{equation}%
converges strongly in $\mathcal{H}$ to $q^{\ast }$ the unique element of $%
F_{ix}(S)\cap \arg \min_{Q}\varphi $ satisfying the variational inequality%
\begin{equation}
\langle f(q^{\ast })-q^{\ast },x-q^{\ast }\rangle \leq 0  \label{sol}
\end{equation}%
for all $x\in F_{ix}(S)\cap \arg \min_{Q}\varphi .$
\end{corollary}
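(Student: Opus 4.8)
The plan is to reduce the corollary to Theorem \ref{Th42} by identifying the operator $A$ with the gradient $\nabla\varphi$. The key observation is the Baillon--Haddad theorem: if $\varphi$ is a convex $C^{1}$ function on $Q$ whose gradient $\nabla\varphi$ is $L$-Lipschitz, then $\nabla\varphi$ is $\frac{1}{L}$-inverse strongly monotone, i.e. $\langle\nabla\varphi(x)-\nabla\varphi(y),x-y\rangle\geq\frac{1}{L}\|\nabla\varphi(x)-\nabla\varphi(y)\|^{2}$ for all $x,y\in Q$. Thus, setting $A:=\nabla\varphi$ and $\nu:=\frac{1}{L}$, assumption (A3) is satisfied, and the interval $]0,2\nu[$ becomes $]0,\frac{2}{L}[$, which matches the hypotheses (ii) and the constraint $\{\lambda_{n}\}\in[0,\frac{2}{L}]$ in the corollary.

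Next I would identify the solution set of the variational inequality $VI(\nabla\varphi,Q)$ with $\arg\min_{Q}\varphi$. This is the standard first-order optimality characterization for convex optimization over a convex set: since $\varphi$ is convex and differentiable, $q\in Q$ minimizes $\varphi$ over $Q$ if and only if $\langle\nabla\varphi(q),x-q\rangle\geq0$ for every $x\in Q$. Hence $S_{VI(\nabla\varphi,Q)}=\arg\min_{Q}\varphi$, and consequently $\Omega=F_{ix}(S)\cap S_{VI(\nabla\varphi,Q)}=F_{ix}(S)\cap\arg\min_{Q}\varphi$, which is nonempty by assumption, so (A5) holds. The iterative process \eqref{exp} is then literally the perturbed process \eqref{rr} of Theorem \ref{Th42} with $A=\nabla\varphi$, and hypotheses (i)--(v) of the corollary are exactly hypotheses (i)--(v) of that theorem. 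Therefore Theorem \ref{Th42} applies verbatim and yields that $\{z_{n}\}$ converges strongly to $q^{\ast}$, the unique solution of problem (VP), which in this setting reads: $q^{\ast}\in F_{ix}(S)\cap\arg\min_{Q}\varphi$ and $\langle f(q^{\ast})-q^{\ast},x-q^{\ast}\rangle\leq0$ for all $x\in F_{ix}(S)\cap\arg\min_{Q}\varphi$, i.e. precisely \eqref{sol}.

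The only genuine content beyond a direct citation is the verification of the two ingredients above: the Baillon--Haddad inverse-strong-monotonicity of $\nabla\varphi$, and the optimality-condition identification of $\arg\min_{Q}\varphi$ with the VI solution set. Neither is hard, but since the paper's statement of (A3) requires $A$ to map into $\mathcal{H}$ (which $\nabla\varphi$ does) and does not otherwise restrict $A$, there is nothing further to check; the main obstacle, if any, is simply making sure the Baillon--Haddad inequality is invoked correctly and that the optimality characterization is stated for the constrained (over $Q$) problem rather than the unconstrained one. Once these are in place, the corollary follows immediately, and the claimed improvement over \cite[Theorem 5.2]{Xu2} comes from allowing the general contraction $f$ in place of a fixed anchor point $u$ and from permitting the perturbations $\{e_{n}\}$ and the relaxed conditions (iii)--(iv) on the parameters.
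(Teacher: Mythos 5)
Your proposal is correct and follows exactly the paper's own argument: invoke the Baillon--Haddad theorem to see that $\nabla\varphi$ is $\tfrac{1}{L}$-inverse strongly monotone, identify $S_{VI(\nabla\varphi,Q)}$ with $\arg\min_{Q}\varphi$ via the first-order optimality condition for constrained convex minimization, and then apply Theorem \ref{Th42} verbatim with $A=\nabla\varphi$ and $\nu=\tfrac{1}{L}$. No gaps; this matches the paper's proof.
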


\begin{proof}
The proof follows directly from Theorem \ref{Th42}. In fact, according to
the famous Baillon-Haddad Theorem (\cite[Theorem 3.13]{Pey}), the operator $%
\nabla \varphi :Q\rightarrow $ $\mathcal{H}$ is $\frac{1}{L}$ inverse
strongly monotone and , from the classical varational characterization of
constrained convex problem solutions (\cite[Theorem 3.13]{Gul}), we have
\begin{equation*}
S_{VI(\nabla \varphi ,Q)}=\{q\in Q:\langle \nabla \varphi (q),x-q\rangle
\geq 0~\forall x\in Q\}=\arg \min_{Q}\varphi .
\end{equation*}
\end{proof}

\section{Numerical experiments}

In this section, we investigate through some numerical experiments the effect
of the sequence $\{\alpha _{n}\}$ on the rate convergence of sequences $%
\{z_{n}\}$ generated by a particular example of the process (\ref{exp})
studied in the previous section. Here we consider the simple case where:

\begin{enumerate}
\item[(1)] The Hilbert space $\mathcal{H}$ is $\mathbb{R}^{2}$ endowed with
its natural inner product $\langle x,y\rangle =x_{1}y_{1}+x_{2}y_{2}.$

\item[(2)] The closed and convex subset $Q$ is given by: $%
Q=\{x=(x_{1},x_{2})^{t}\in \mathbb{R}^{2}:x_{1},x_{2}\geq 0\}.$

\item[(3)] The contraction mapping  $f:Q\rightarrow Q$ is defined by $f(x)=%
\frac{1}{2}(5+\cos (x_{1}+x_{2}),6-\sin (x_{1}+x_{2}))^{t}$ for all $%
x=(x_{1},x_{2})^{t}\in Q.$ Using the mean value theorem, one can easily
verify that $f$ is Lipschitz continuous function with Lipschitz constant $%
\rho \leq \frac{\sqrt{2}}{2}.$

\item[(4)] The non expansive mapping $S:Q\rightarrow Q$ is the identity.

\item[(5)] The convex function $\varphi :Q\rightarrow \mathbb{R}$ is defined
by: $\varphi (x)=\frac{1}{2}\left\Vert Bx-b\right\Vert ^{2}$ where%
\begin{equation*}
B=\left(
\begin{array}{cc}
1 & 1 \\
2 & 2%
\end{array}%
\right) ,~b=\left(
\begin{array}{c}
3 \\
5%
\end{array}%
\right) .
\end{equation*}%
A simple calculation yields%
\begin{equation*}
\nabla \varphi (x)=B^{t}(Bx-b)=\left(
\begin{array}{c}
5x_{1}+5x_{2}-13 \\
5x_{1}+5x_{2}-13%
\end{array}%
\right) ,~\forall x=(x_{1},x_{2})^{t}\in Q.
\end{equation*}%
Hence $\nabla \varphi $ is Lipschitz continuous with Lipschitz constant $L=10.
$ Moreover,
\begin{equation*}
\Omega =F_{ix}(S)\cap \arg \min_{Q}\varphi =\{x=(x_{1},x_{2})^{t}\in
Q:x_{1}+x_{2}=2.6\}=\Delta _{2.6}^{2}
\end{equation*}%
where, for $a>0$ and $n\in \mathbb{N},$%
\begin{equation*}
\Delta _{a}^{n}=\{x=(x_{1},\cdots ,x_{n})^{t}\in \mathbb{R}^{n}:x_{1},\cdots
,x_{n}\geq 0,~x_{1}+\cdots +x_{n}=a\}.
\end{equation*}%
Let us notice that, by using KKT Theorem, one can easily verify that the
projection onto $\Delta _{a}^{n}$ is given by%
\begin{equation*}
P_{\Delta _{a}^{n}}(x)=(\max (x_{1}-\alpha (x),0),\cdots ,\max (x_{n}-\alpha
(x),0))
\end{equation*}%
for every $x=(x_{1},\cdots ,x_{n})^{t}\in \mathbb{R}^{n},$ where $\alpha (x)$
is the unique real solution $\alpha $ of the equation $\sum_{k=1}^{n}\max
(x_{k}-\alpha ,0)=a.$ Hence  a simple routine on Matlab, using the fact that
the unique solution $q^{\ast }$ to the variational problem (\ref{sol}) is
the fixed point of the contraction $P_{\Omega }\circ f:Q\rightarrow Q,$
provides a precise numerical approximation of $q^{\ast }:$%
\begin{equation*}
q^{\ast }\simeq (0.9647,1.6353)^{t}.
\end{equation*}

\item[(6)] The sequence $\{\lambda _{k}\}$ is constant and equal to $\frac{1%
}{L}=0.1.$

\item[(7)] The sequence $\{\theta _{k}\}$ is given by $\theta _{k}=\frac{1}{%
k^{\theta }}$ where $\theta $ is a constant which belongs to $]0,1].$

\item[(8)] The perturbation term $\{e_{k}\}$ is given by $e_{k}=\frac{X_{k}}{%
k^{2}}$ where $\{X_{k}\}$ is a sequence of independent random variables such
that every $X_{k}$ is uniform on the square $[-1,1]\times \lbrack -1,1].$

\item[(9)] The initial value is $z_{1}=\left(
\begin{array}{c}
2 \\
3%
\end{array}%
\right) .$

\item[(10)] $N_{\max }$ the maximal number of iterations $k$ is $N_{\max
}=6000.$
\end{enumerate}
We aim to study numerically the relation between $\theta $ and the rate
of convergence of the sequence $\{z_{k}\}$ to $q^{\ast }.$
We can summarize our numerical results in the following two points:
\par\noindent (A): The convergence of the sequence $\{z_{k}\}$ to $q^{\ast }$ is very
slow for small values of the parameter $\theta $ as the
following table shows:
\begin{table}[h]
\centering
\begin{tabular}{cc}
\hline\hline
$\theta $ & $\min_{k\leq N_{\max }}\frac{\left\Vert z_{k}-q^{\ast
}\right\Vert }{\left\Vert q^{\ast }\right\Vert }$ \\ [0.5ex]
\hline
$0.1$ &  0.4774\\
$0.2$ & 0.1810 \\
$0.3$ & 0.0742 \\
$0.4$ &  0.0309\\ [1ex]
\end{tabular}
\caption{Slow convergence of $\{z_{k}\}$ for small values of $\theta$}
\end{table}
\par\noindent (B) : The convergence of $\{z_{k}\}$ to $q^{\ast }$ is more clear if $\theta$ is close to $1$ as it is shown by the following two tables:
\begin{table}[h]
\centering
\begin{tabular}{cc}
\hline\hline
$\theta $ & $\min_{k\leq N_{\max }}\frac{\left\Vert z_{k}-q^{\ast
}\right\Vert }{\left\Vert q^{\ast }\right\Vert }$ \\[0.5ex]
\hline
$0.6$ &0.0055  \\
$0.8$ &0.0010  \\
$0.9$ &0.0005  \\
$1.0$ &0.0008\\ [1ex]
\end{tabular}
\caption{Convergence of $\{z_{k}\}$ for some values of $\theta$ closed to 1}
\end{table}
\newpage
The second table indicates, for some values of $\varepsilon >0$ and $\theta ,
$ $N(\varepsilon ,\theta )$ the first iteration $k\leq N_{\max }$ such that $%
\frac{\left\Vert z_{k}-q^{\ast }\right\Vert }{\left\Vert q^{\ast
}\right\Vert }\leq \varepsilon .$

\begin{table}[h]
\centering
\begin{tabular}{c| c c c c}
\hline\hline
$\varepsilon $& $\theta =0.6$ & $\theta =0.8$ & $\theta =0.9$ & $\theta =1.0$ \\[0.5ex]
\hline
0.5 &  6&5  &4  &4  \\
0.10 & 53 &23  &14  &17  \\
0.05 &158 &56  &36  &42  \\
0.01 &2200 &372 &249 &314 \\
0.005 &ND  &854 &533 &716 \\
0.001 &ND &ND  &2989  &4742\\[1ex]
\end{tabular}
\caption{$N(\varepsilon ,\theta )$}
\end{table}

\par\noindent Remark: $N(\varepsilon ,\theta )=$ ND (Not Defined) means that $\frac{\left\Vert
z_{k}-q^{\ast }\right\Vert }{\left\Vert q^{\ast }\right\Vert }>\varepsilon $
for all the iterations $k\leq N_{\max }.$

\par\noindent Finally, the schema (Figure \ref{convergence}) shows the convergence of $\{z_{k}\}$ to $%
q^{\ast }$ for some values of the parameter $\theta $ close to $1$.
\begin{figure}[ht]
\centering
\includegraphics[width=8cm]{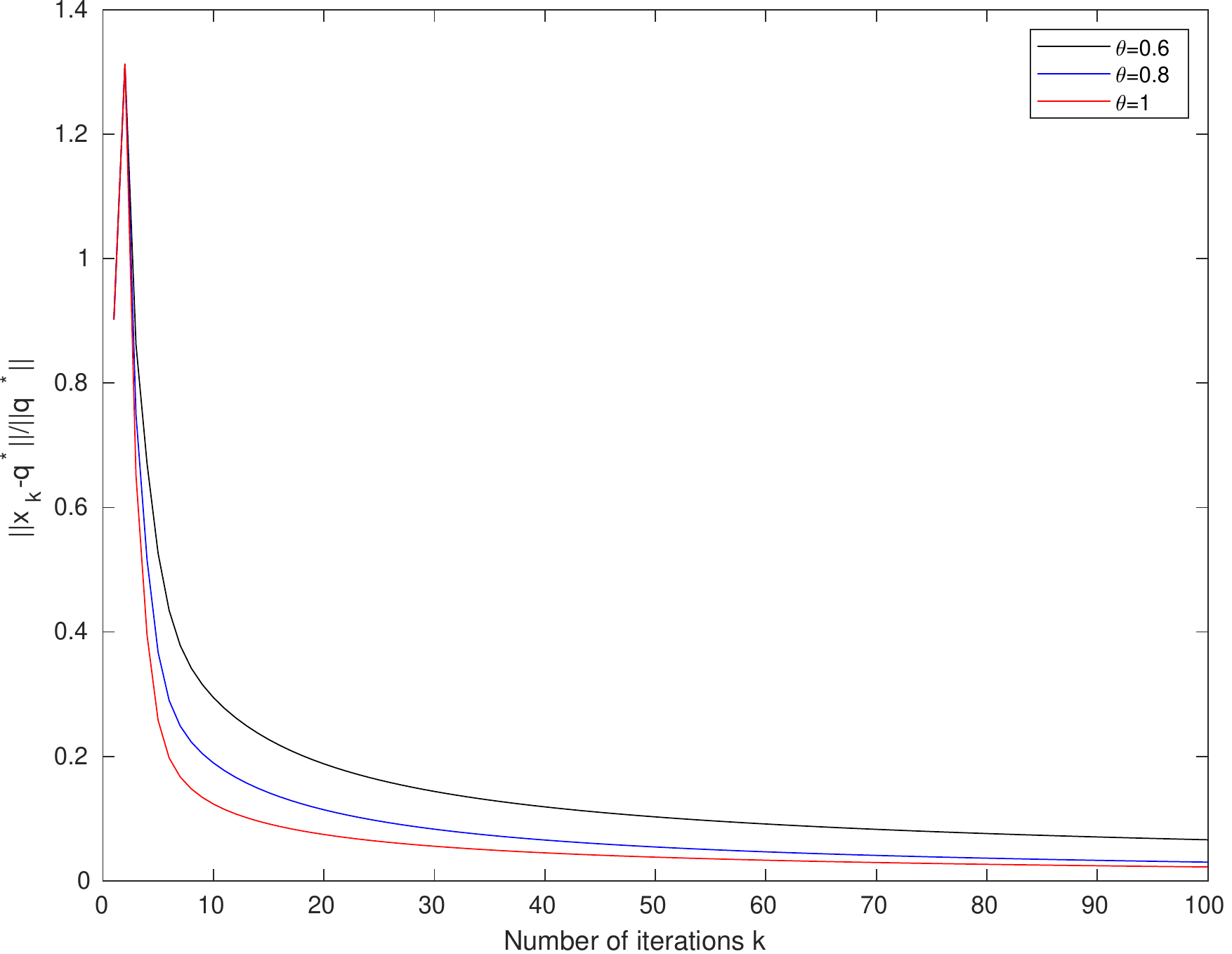}
\caption{the effect of $\theta$ on the convergence of the algorithm (\ref{exp})}
\label{convergence}
\end{figure}

\end{document}